\newcommand{\N}{\mathbb{N}}
\newcommand{\Z}{\mathbb{Z}}
\newcommand{\R}{\mathbb{R}}
\newcommand{\C}{\mathbb{C}}
\newcommand{\T}{\mathbb{T}}
\author{Vincent DEVINCK}
\newtheorem{Theo}{Theorem}[section]
\newtheorem{Lem}[Theo]{Lemma}
\newtheorem{Prop}[Theo]{Proposition}
\newtheorem{Cor}[Theo]{Corollary}
\newtheorem{Fac}[Theo]{Fact}
\theoremstyle{definition}
\newtheorem{Def}[Theo]{Definition}
\newtheorem{Ex}[Theo]{Example}
\newtheorem{Que}[Theo]{Question}
\newtheorem{Rem}[Theo]{Remark}
\numberwithin{equation}{section}
\date{}
\title{\textbf{Universal Jamison spaces and Jamison sequences for $C_0$-semigroups}}
\begin{document}
\maketitle

\begin{abstract}
An increasing sequence of positive integers $(n_k)_{k\ge 0}$ is said to be a Jamison sequence if the following property holds true: for every separable complex Banach space $X$ and every $T\in \mathcal{B}(X)$ which is partially power-bounded with respect to $(n_k)_{k\ge 0}$, the set $\sigma_p(T)\cap \T$ is at most countable. We prove that a separable infinite-dimensional complex Banach space $X$ which admits an unconditional Schauder decomposition is such that for any sequence $(n_k)_{k\ge 0}$ which is not a Jamison sequence, there exists $T\in \mathcal{B}(X)$ which is partially power-bounded with respect to this sequence and such that the set $\sigma_p(T)\cap \T$ is uncountable. We also investigate the notion of Jamison sequences for $C_0$-semigroups and we give an arithmetic characterization of these sequences.
\end{abstract}

\section{Introduction}

Let $X$ be a separable infinite-dimensional complex Banach space and $T\in \mathcal{B}(X)$ a bounded linear operator on $X$. In the whole paper, $\T=\big\{\lambda\in\C\,;\, \vert\lambda\vert=1\big\}$ stands for the unit circle of the complex plane and $\sigma_p(T)=\big\{\lambda\in\C\,;\, \mathrm{Ker}(T-\lambda)\ne \{0\}\big\}$ is the point spectrum of $T$. The set $\sigma_p(T)\cap \T$ will be called the \textit{unimodular point spectrum} of $T$. It is now well-known that the behaviour of the sequence $(\vert\vert T^n\vert\vert)_{n\ge 0}$ of the norms of the iterates of $T$ is closely related to the size of the unimodular point spectrum $\sigma_p(T)\cap \T$: Jamison proved in \cite{Jamison} that if $T$ is power-bounded, that is to say if $\sup_{n\ge 0}\vert\vert T^n\vert\vert<+\infty$, then the unimodular point spectrum $\sigma_p(T)\cap \T$ of $T$ is at most countable. The influence of (partial) power-boundedness on the size of $\sigma_p(T)\cap\T$ has been studied by Ransford \cite{Ransford}, Ransford and Roginskaya \cite{RansfordRoginskaya}, Badea and Grivaux (\cite{BadeaGrivaux1} and \cite{BadeaGrivaux2}) and more recently by Grivaux and Eisner \cite{EisnerGrivaux}.

If $(n_k)_{k\ge 0}$ is an increasing sequence of positive integers, we say that the operator $T\in \mathcal{B}(X)$ is \textit{partially power-bounded with respect to the sequence} $(n_k)_{k\ge 0}$ if $\sup_{k\ge 0}\vert\vert T^{n_k}\vert\vert<+\infty$. We say that the sequence $(n_k)_{k\ge 0}$ is a \textit{Jamison sequence} if for every separable complex Banach space $X$ and every bounded linear operator $T$ on $X$ which is partially power-bounded with respect to the sequence $(n_k)_{k\ge 0}$, the set $\sigma_p(T)\cap \T$ is at most countable. The notion of Jamison sequence has been intensely studied by the previously mentioned authors, and C. Badea and S. Grivaux found an arithmetic characterization of Jamison sequences in \cite{BadeaGrivaux2}. Under the assumption that $n_0=1$, one can define a distance $d_{(n_k)}$ on $\T$ by setting
$$
\forall (\lambda,\mu)\in\T^2,\qquad d_{(n_k)}(\lambda,\mu)=\sup_{k\ge 0}\vert \lambda^{n_k}-\mu^{n_k}\vert.
$$
The characterization of \cite{BadeaGrivaux2} runs as follows.
\begin{Theo}\emph{(\cite[Theorem 2.1]{BadeaGrivaux2})}\label{jamisonth} Let $(n_k)_{k\ge 0}$ be an increasing sequence of positive integers such that $n_0=1$. The following assertions are equivalent$:$\\
\noindent $(1)$ $(n_k)_{k\ge 0}$ is a Jamison sequence$;$\\
\noindent $(2)$ there exists $\varepsilon>0$ such that any two distinct points of $\T$ are $\varepsilon$-separated for the distance $d_{(n_k)}:$
$$
\forall (\lambda,\mu)\in \T^2,\, \lambda\ne \mu\, \Longrightarrow\, \sup_{k\ge 0}\vert \lambda^{n_k}-\mu^{n_k}\vert\ge \varepsilon.
$$
\end{Theo}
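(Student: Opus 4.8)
The plan is to prove the two implications separately: $(2)\Rightarrow(1)$ is soft, whereas $(1)\Rightarrow(2)$ requires an explicit construction and is where essentially all the difficulty lies. For $(2)\Rightarrow(1)$, suppose $X$ is separable, $T\in\mathcal{B}(X)$, and $M:=\sup_{k\ge0}\Vert T^{n_k}\Vert<+\infty$; for each $\lambda\in\sigma_p(T)\cap\T$ fix a norm-one eigenvector $x_\lambda$. If $\lambda,\mu\in\sigma_p(T)\cap\T$ are distinct, then for every $k$
\[
(\lambda^{n_k}-\mu^{n_k})\,x_\mu \;=\; T^{n_k}(x_\lambda-x_\mu)-\lambda^{n_k}(x_\lambda-x_\mu),
\]
whence $\vert\lambda^{n_k}-\mu^{n_k}\vert\le(M+1)\Vert x_\lambda-x_\mu\Vert$; taking the supremum over $k$ and using $(2)$ yields $\Vert x_\lambda-x_\mu\Vert\ge\varepsilon/(M+1)$. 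Thus $\{x_\lambda\}_\lambda$ is an $\varepsilon/(M+1)$-separated family in the separable space $X$, hence at most countable, and the same estimate shows $\lambda\mapsto x_\lambda$ is injective; therefore $\sigma_p(T)\cap\T$ is at most countable.

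For $(1)\Rightarrow(2)$ I would argue by contraposition: assuming the separation property in $(2)$ fails, I would construct a separable Hilbert space $H$ and an operator $T\in\mathcal{B}(H)$ that is partially power-bounded with respect to $(n_k)$ and has uncountable unimodular point spectrum. The first step is purely arithmetic. Since $d_{(n_k)}(\lambda,\mu)=\sup_k\vert1-(\mu/\lambda)^{n_k}\vert$ depends only on $\mu/\lambda$, the failure of $(2)$ means that for every $\delta>0$ there exists $\nu\in\T\setminus\{1\}$ with $\sup_k\vert1-\nu^{n_k}\vert<\delta$; from $n_0=1$ such a $\nu$ is automatically close to $1$, and elementary identities on $\T$ such as $1-z^m=(1-z)(1+z+\dots+z^{m-1})$ give control of the form $d_{(n_k)}(1,\nu^m)\le m\,d_{(n_k)}(1,\nu)$, so that one can even arrange finitely many distinct points $1,\nu,\dots,\nu^{N-1}$ that are pairwise $\delta$-close for $d_{(n_k)}$. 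Iterating, I would select a sequence $(\nu_j)_{j\ge1}$ of such ``bad'' points with $d_{(n_k)}(1,\nu_j)\to0$ fast enough that the products $\prod_{j\ge1}\nu_j^{\,\epsilon_j}$, $(\epsilon_j)\in\{0,1\}^{\N}$, converge, are pairwise distinct, and form a Cantor set $K\subseteq\T$.

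It then remains to realise an operator with $K\subseteq\sigma_p(T)\cap\T$. I would take $H$ to have an orthonormal basis indexed by the nodes of the infinite binary tree and define $T$ level by level along the tree — essentially a weighted backward shift toward the root together with a diagonal correction built from the $\nu_j$ — so that the eigenvalue equation $Tx=\lambda x$ can be solved along each infinite branch $\epsilon=(\epsilon_j)$, producing a nonzero vector $x_\epsilon\in H$ with $Tx_\epsilon=\lambda_\epsilon x_\epsilon$ for $\lambda_\epsilon=\prod_j\nu_j^{\,\epsilon_j}\in K$. Then $\sigma_p(T)\cap\T\supseteq K$ is uncountable and $(n_k)$ is not a Jamison sequence.

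The main obstacle is entirely contained in this construction, and it is twofold. First, one must choose the tree weights and the precise form of $T$ so that $T$ is bounded and the formal eigenvectors $x_\epsilon$ genuinely lie in $H$ and are nonzero; this is what dictates the prescribed rate of decay of the arguments of the $\nu_j$ (equivalently, of the numbers $d_{(n_k)}(1,\nu_j)$). Second — and this is where the hypothesis that $(2)$ fails is really used — one must verify $\sup_k\Vert T^{n_k}\Vert<+\infty$: since $T^{n_k}$ acts on the level-$j$ part of $H$ essentially through the scalars $\nu_j^{n_k}$, each lying within $d_{(n_k)}(1,\nu_j)$ of $1$, the iterate $T^{n_k}$ should differ from a fixed bounded operator by a perturbation whose total size over the levels is summable, uniformly in $k$. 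Getting this telescoping estimate to close — i.e.\ matching the admissible growth of the weights against the imposed decay of $d_{(n_k)}(1,\nu_j)$ — is the technical heart of the argument.
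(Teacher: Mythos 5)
First, note that the paper does not actually prove Theorem~\ref{jamisonth}: it is quoted from \cite[Theorem~2.1]{BadeaGrivaux2}, and the closest arguments carried out in the paper are the Eisner--Grivaux-style construction of Theorem~\ref{theoremejamisonuniversel} and the Badea--Grivaux-style construction in the proof of Theorem~\ref{jamisoncontinu}. Your implication $(2)\Rightarrow(1)$ is correct and complete; it is exactly the standard separation argument (the identity $(\lambda^{n_k}-\mu^{n_k})x_\mu=T^{n_k}(x_\lambda-x_\mu)-\lambda^{n_k}(x_\lambda-x_\mu)$ gives $\Vert x_\lambda-x_\mu\Vert\ge\varepsilon/(M+1)$, and a separable space contains no uncountable separated family), which is also how the paper proves $(3)\Rightarrow(1)$ of Theorem~\ref{jamisoncontinu}. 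Your arithmetic preliminaries for the converse ($d_{(n_k)}$ depends only on $\mu/\lambda$, the bound $d_{(n_k)}(1,\nu^m)\le m\,d_{(n_k)}(1,\nu)$, and the construction of a Cantor set of products $\prod_j\nu_j^{\epsilon_j}$) are also sound, and your tree-indexed ``diagonal plus weighted backward shift'' is a legitimate structural variant of the chain used in \cite{EisnerGrivaux} and in Theorem~\ref{theoremejamisonuniversel} (there the tree is linearized via the function $j(n)<n$ taking each value infinitely often).

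The genuine gap is that the one estimate carrying all the difficulty of the theorem, $\sup_k\Vert T^{n_k}\Vert<+\infty$, is asserted but not proved, and the heuristic you give for it does not close as stated. The entry of $T^{n_k}$ joining a node at level $\ell$ to its ancestor $m$ levels up is a product of $m$ weights times
$$
\sum_{j_0+\cdots+j_m=n_k-m}\lambda_{a_0}^{j_0}\cdots\lambda_{a_m}^{j_m},
$$
a sum of $\binom{n_k}{m}$ unimodular terms; the trivial bound blows up with $n_k$, so the fact that each $\nu_j^{n_k}$ lies within $d_{(n_k)}(1,\nu_j)$ of $1$ must enter through a specific mechanism. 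In the known proofs this is the divided-difference representation of $z\mapsto z^{n}$ at the points $\lambda_{a_0},\dots,\lambda_{a_m}$, which trades numerators $\vert\lambda_{a_i}^{n_k}-\lambda_{a_{i'}}^{n_k}\vert\le d_{(n_k)}(\lambda_{a_i},\lambda_{a_{i'}})$ against denominators $\vert\lambda_{a_i}-\lambda_{a_{i'}}\vert$, and it forces the interleaved choice of parameters (fix the weights at level $j$ first, then choose $\nu_{j+1}$ so $d_{(n_k)}$-close to $1$ that the resulting off-diagonal block is smaller than $2^{-j}$, with the weights themselves built from ratios $\vert\lambda_s-\lambda_{j(s)}\vert/\vert\lambda_{s'}-\lambda_{j(s')}\vert$ so that the products along a branch telescope). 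Without this identity (or an equivalent telescoping trick) the phrase ``the perturbation is summable over the levels, uniformly in $k$'' is precisely the statement to be proved, not a consequence of the closeness of the $\nu_j^{n_k}$ to $1$. It is also worth recording that the original proof in \cite{BadeaGrivaux2} avoids this shift construction altogether: it extracts from the failure of $(2)$ an uncountable subset $K\subseteq\T$ on which $d_{(n_k)}$ is separable and realizes $K$ as unimodular point spectrum on a renormed function space with a $d_{(n_k)}$-continuous eigenvector field, exactly as the paper does for semigroups in Theorem~\ref{jamisoncontinu}; that route replaces the norm estimate on $T^{n_k}$ by the (much softer) boundedness of the translation semigroup for the norm $\Vert\cdot\Vert_*$.
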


The hard part of the proof of Theorem \ref{jamisonth} is the following: when condition $(2)$ is not	satisfied, C. Badea and S. Grivaux had to construct a separable Banach space $X$ and a bounded linear operator $T$ on $X$ which is partially power-bounded with respect to $(n_k)_{k\ge 0}$ and such that the set $\sigma_p(T)\cap \T$ is uncountable. In this paper, we are interested in this problem of construction. We can state the problem as follows.

\begin{Que}\label{question}
If $(n_k)_{k\ge 0}$ is not a Jamison sequence, on which separable complex Banach spaces $X$ can we construct a partially power-bounded operator $T$ with respect to the sequence $(n_k)_{k\ge 0}$ with an uncountable unimodular point spectrum?
\end{Que}

A separable complex Banach space for which the answer to the above question is affirmative will be called a \textit{universal Jamison space} (Definition \ref{defjamisonuniversel}). Eisner and Grivaux proved in \cite{EisnerGrivaux} that a separable infinite-dimensional complex Hilbert space is a universal Jamison space. Recall that if $T$ is a bounded linear operator on a separable complex Banach space $X$, one says that $T$ has a \textit{perfectly spanning set of eigenvectors associated to unimodular eigenvalues} (see \cite{BG1}) if there exists a continuous probability measure $\sigma$ on the unit circle $\T$ such that for any Borel subset $B$ of $\T$ with $\sigma(B)=1$, we have
$$
\overline{\textrm{span}}\big[\mathrm{Ker}(T-\lambda)\,;\, \lambda\in B\big]=X.
$$
Grivaux proved in \cite[Theorem $4.1$]{Grivaux} that $T$ has a perfectly spanning set of eigenvectors associated to unimodular eigenvalues if and only if for any countable subset $D$ of $\T$, 
$$
\overline{\textrm{span}}\big[\mathrm{Ker}(T-\lambda)\,;\, \lambda\in \T\setminus D\big]=X.
$$

\begin{Theo}\emph{(\cite[\textrm{Theorem}\ 2.1]{EisnerGrivaux})}\label{hilbertjamisonuniversel}
Let $(n_k)_{k\ge 0}$ be an increasing sequence of positive integers $($with $n_0=1)$ such that for any $\varepsilon>0$ there exists $\lambda\in \T\setminus \{1\}$ such that
$$
\sup_{k\ge 0}\vert \lambda^{n_k}-1\vert \le \varepsilon.
$$
Let $\delta$ be any positive real number. There exists a bounded linear operator $T$ on the complex Hilbert space $\ell_2(\N)$ such that $T$ has a perfectly spanning set of eigenvectors associated to unimodular eigenvalues and
$$
\sup_{k\ge 0}\vert\vert T^{n_k}\vert\vert\le 1+\delta.
$$
In particular the unimodular point spectrum of $T$ is uncountable.
\end{Theo}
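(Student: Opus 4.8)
The plan is to build, for a given $\delta>0$, the operator as an infinite tensor product of tiny $2\times2$ perturbations of the identity. Note first that $d_{(n_k)}$ is translation invariant ($d_{(n_k)}(\alpha\lambda,\alpha\mu)=d_{(n_k)}(\lambda,\mu)$ since $\vert\alpha^{n_k}\vert=1$) and, because $n_0=1$, is a genuine metric; so the hypothesis says exactly that $\Lambda_\varepsilon:=\{\lambda\in\T:\sup_k\vert\lambda^{n_k}-1\vert\le\varepsilon\}$ is nontrivial for every $\varepsilon>0$. In fact each $\Lambda_\varepsilon$ is infinite (if $d_{(n_k)}(\lambda,1)\le\varepsilon/m$ then $\lambda,\lambda^2,\dots,\lambda^m\in\Lambda_\varepsilon$, and these are distinct since $\vert\lambda-1\vert\le\varepsilon$ forces large order when $\lambda$ is a root of unity), hence has an accumulation point, so $\Lambda_{2\varepsilon}$ accumulates at $1$ by translation invariance. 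Fixing a rapidly decreasing summable sequence $(\rho_j)_{j\ge1}$ with $\sum_j\rho_j$ as small as we wish, I would then choose unimodular $\lambda_j\ne1$ with $g_j:=\sup_k\vert\lambda_j^{n_k}-1\vert\le\rho_j^{2}$ (so $\vert\lambda_j-1\vert\le g_j$, as $n_0=1$), chosen generically so that $\Phi\colon\{0,1\}^{\N}\to\T$, $\Phi(\eta)=\prod_j\lambda_j^{\eta_j}$, is injective (possible since each $\Lambda_\varepsilon$ has infinitely many points near $1$, from which one avoids the countably many multiplicative relations with earlier choices); the product converges because $\sum_j\vert\lambda_j-1\vert<\infty$, $K:=\Phi(\{0,1\}^{\N})$ is a Cantor set, and $\sigma:=\Phi_*\nu$ ($\nu$ the Haar measure of $\{0,1\}^{\N}$) is a continuous probability measure on $K$.

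For each $j$ I would take a $2\times2$ matrix $R_j$ with eigenvalues $1$ and $\lambda_j$ and normalised eigenvectors $\xi_j$ (for $1$) and $w_j$ (for $\lambda_j$) at angle of order $\rho_j$; its eigenbasis $V_j$ then has condition number $\kappa_j=O(1/\rho_j)$, and from $R_j^{m}-I=V_j\,\mathrm{diag}(0,\lambda_j^{m}-1)\,V_j^{-1}$ one gets $\Vert R_j^{m}-I\Vert\le\kappa_j\vert\lambda_j^{m}-1\vert$ for all $m$. Since $R_j\xi_j=\xi_j$, the infinite tensor product $T:=\bigotimes_j R_j$ is bounded on the separable Hilbert space $\bigotimes_j H_j$ formed with respect to the unit vectors $\xi_j$, which we identify with $\ell_2(\N)$ (using $R_j\xi_j=\xi_j$ and $\prod_j\Vert R_j\Vert<\infty$); moreover $T^{n_k}=\bigotimes_j R_j^{n_k}$ with $R_j^{n_k}\xi_j=\xi_j$, so
\[
\Vert T^{n_k}\Vert\le\prod_j\Vert R_j^{n_k}\Vert\le\prod_j\bigl(1+\kappa_j\vert\lambda_j^{n_k}-1\vert\bigr)\le\prod_j\bigl(1+\kappa_j g_j\bigr)\le\exp\Bigl(C\sum_j g_j/\rho_j\Bigr)\le1+\delta
\]
for every $k$, once $(\rho_j)$ is small enough. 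The simple tensor $E(\Phi(\eta)):=\bigotimes_j u_j^{\eta_j}$ with $u_j^{0}=\xi_j$, $u_j^{1}=w_j$ lies in this space (as $\sum_j\Vert w_j-\xi_j\Vert=O(\sum_j\rho_j)<\infty$) and satisfies $TE(\Phi(\eta))=\Phi(\eta)E(\Phi(\eta))$; hence $K\subseteq\sigma_p(T)\cap\T$ is uncountable.

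To get perfect spanning I would verify the measure-theoretic definition with $\sigma$: let $B$ be Borel with $\sigma(B)=1$ and $x\in\ell_2(\N)$ be orthogonal to $E(\lambda)$ for every $\lambda\in B\cap K$. The map $\eta\mapsto\bigotimes_j u_j^{\eta_j}$ is uniformly continuous (the tail tensors over $j>N$ being $O(\sum_{j>N}\rho_j)$-close), so $F(\eta):=\langle x,E(\Phi(\eta))\rangle$ is continuous on $\{0,1\}^{\N}$; it vanishes on $\Phi^{-1}(B)$, which has full $\nu$-measure and hence is dense since $\nu$ has full support, so $F\equiv0$. In particular $x\perp E(\Phi(\eta))$ for every finitely supported $\eta$, and for $\eta$ supported in $\{1,\dots,N\}$ the vectors $E(\Phi(\eta))$ span $\bigl(\bigotimes_{j\le N}H_j\bigr)\otimes\bigotimes_{j>N}\xi_j$ (each $\{\xi_j,w_j\}$ spans $H_j$); since these subspaces increase to a dense subspace of $\ell_2(\N)$, we get $x=0$, i.e. $\overline{\mathrm{span}}[\mathrm{Ker}(T-\lambda):\lambda\in B]=\ell_2(\N)$, which is what perfect spanning requires.

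I expect the crux to be the second step. One cannot use diagonal (normal) blocks $R_j$: then $T$ is unitary and its only eigenvectors in the separable tensor product are the simple tensors eventually equal to $\xi_j$, giving merely countably many unimodular eigenvalues --- so genuine non-normality of $T$ is unavoidable, whereas non-normality is exactly what can make $\Vert T^{n_k}\Vert$ blow up. The resolution, and the point where the hypothesis is used essentially (one needs $\sup_k\vert\lambda_j^{n_k}-1\vert$ small, not just $\lambda_j\to1$), is that the ``non-normality budget'' $\rho_j$ is free apart from summability, so it may be kept far larger than the arithmetic quantity $g_j=d_{(n_k)}(\lambda_j,1)$, which can be driven to $0$ as fast as we like; then $\Vert R_j^{n_k}-I\Vert\lesssim g_j/\rho_j$ is summable uniformly in $k$ and the tensor product stays within $1+\delta$ of unitary in operator norm.
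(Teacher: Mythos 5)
Your construction is sound but follows a genuinely different route from the one the paper (after Eisner--Grivaux) takes. The paper's proof perturbs a diagonal operator $D$ with unimodular entries by a carefully weighted nuclear backward shift $B$, computes the matrix coefficients $t_{k,\ell}^{(i,n)}$ of $T^{n}=(D+B)^{n}$ explicitly, and controls the combinatorial sums $s_{k,\ell}^{(n)}=\sum_{j_k+\dots+j_\ell=n-(\ell-k)}\lambda_k^{j_k}\dots\lambda_\ell^{j_\ell}$; perfect spanning is then obtained from Grivaux's approximation criterion by showing $\Vert u_1^{(n)}-u_1^{(j(n))}\Vert\le 2^{-n}$. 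Your infinite tensor product $\bigotimes_j R_j$ of $2\times 2$ non-normal blocks replaces all of that combinatorics by the single similarity bound $\Vert R_j^{m}-I\Vert\le\kappa_j\vert\lambda_j^{m}-1\vert$, and your diagnosis of where the hypothesis enters is exactly right: the arithmetic quantity $g_j=\sup_k\vert\lambda_j^{n_k}-1\vert$ can be driven far below the non-normality budget $\rho_j\sim 1/\kappa_j$ while $\sum_j\rho_j$ stays small, so $\prod_j(1+\kappa_j g_j)\le 1+\delta$ uniformly in $k$. You also verify perfect spanning directly from the measure-theoretic definition (continuity of $\eta\mapsto E(\Phi(\eta))$ plus density of full-measure sets for $\nu$) rather than via the approximation criterion. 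What the shift construction buys in exchange is generality: it only needs an unconditional Schauder decomposition and biorthogonal systems, which is how the paper extends the result beyond Hilbert space, whereas the tensor-product argument is tied to the Hilbertian structure.

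One step needs repair: the injectivity of $\Phi(\eta)=\prod_j\lambda_j^{\eta_j}$. Avoiding ``countably many multiplicative relations with earlier choices'' only rules out coincidences $\Phi(\eta)=\Phi(\eta')$ for finitely supported $\eta,\eta'$; two sequences differing in infinitely many coordinates impose a relation $\prod_j\lambda_j^{\epsilon_j}=1$ with infinitely many nonzero $\epsilon_j\in\{-1,0,1\}$, there are uncountably many such relations, and no stage-by-stage generic avoidance controls them (for instance $\theta_1=\sum_{j\ge 2}\theta_j$ is compatible with any finite independence of the $\theta_j$). Injectivity is what makes $K$ a Cantor set and $\sigma=\Phi_*\nu$ continuous, so it cannot be waved away. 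The fix is quantitative rather than generic: writing $\lambda_j=e^{2i\pi\theta_j}$ with $0<\vert\theta_j\vert\le g_j$, choose the $\lambda_j$ recursively so that $\sum_{l>j}\vert\theta_l\vert<\vert\theta_j\vert$ for every $j$ and $\sum_j\vert\theta_j\vert<\frac{1}{2}$; then for $\eta\ne\eta'$ with first disagreement at $j_0$ one gets $\big\vert\sum_j(\eta_j-\eta_j')\theta_j\big\vert\ge\vert\theta_{j_0}\vert-\sum_{l>j_0}\vert\theta_l\vert>0$ with no wrap-around modulo $1$, hence $\Phi$ is injective. This is compatible with your requirement $g_j\le\rho_j^2$, since $\vert\theta_{j+1}\vert$ can be forced as small as desired once $\theta_j$ is fixed. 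With that amendment the remaining steps (boundedness of the stabilized infinite tensor product, membership of the simple tensors $E(\Phi(\eta))$, continuity of the eigenvector field, and the density argument for perfect spanning) are correct and the proof is complete.
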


In section 2 of this paper, we generalize Theorem \ref{hilbertjamisonuniversel} by providing a large class of separable complex Banach spaces (the class of Banach spaces which admit unconditional Schauder decompositions) which are universal Jamison spaces (Theorem \ref{theoremejamisonuniversel}). We also give several concrete examples of Banach spaces which are universal Jamison spaces and we prove that, in opposition with the spaces which admit an unconditional Schauder decomposition, the hereditarily indecomposable spaces are never universal Jamison spaces.

 In section 3, we investigate the subject of \textit{Jamison sequences for }$C_0$-\textit{semigroups} (Definition \ref{jamisonsemigroupes}) which are the analog of Jamison sequences in the context of semigroups. We give an arithmetic characterization of these sequences (Theorem \ref{caracterisationjamisonsemigroupes}) by using the characterization of Jamison sequences (Theorem \ref{jamisonth}). We also consider \textit{universal Jamison spaces for} $C_0$-\textit{semigroups} (Definition \ref{defuniversaljamisonsg}) and prove that every separable complex Banach space which admits an unconditional Schauder decomposition is a universal Jamison space for $C_0$-semigroups (Theorem \ref{espacejamisonuniverselsemigroupes}).
 
At the end of section 3, we study the Hausdorff dimension of the unimodular point spectrum in the context of semigroups. We prove that if  $(t_k)_{k\ge 0}$ is an increasing sequence of positive real numbers such that $t_0=1$ and $\frac{t_{k+1}}{t_k} \longrightarrow +\infty$, there exists a separable complex Banach space $X$ and a $C_0$-semigroup $(T_t)_{t\ge 0}$ of bounded linear operators on $X$ (with infinitesimal generator $A$) with $\sup_{k\ge 0}\vert\vert T_{t_k}\vert\vert<+\infty$ and such that the set $\sigma_p(A)\cap i\R$ has Hausdorff dimension equal to $1$ (Theorem \ref{hausdorff1}). 

\section{Universal Jamison spaces}
In this section, we investigate the notion of universal Jamison spaces which is the kind of Banach spaces which have been mentioned in Question \ref{question}.

\begin{Def}\label{defjamisonuniversel}
Let $X$ be a separable infinite-dimensional complex Banach space. We say that $X$ is a \textit{universal Jamison space} if the following property holds true: for any increasing sequence of positive integers $(n_k)_{k\ge 0}$ which is not a Jamison sequence, there exists $T\in \mathcal{B}(X)$ which is partially power-bounded with respect to the sequence $(n_k)_{k\ge 0}$ and which has an uncountable unimodular point spectrum.
\end{Def}

\begin{Ex}
According to Theorem \ref{hilbertjamisonuniversel}, the space $\ell_2(\N)$ is a universal Jamison space. More generaly, if we replace $2$ by $p\in [1,+\infty[$ in the proof of Theorem \ref{hilbertjamisonuniversel}, we easily see that the space $\ell_p(\N)$ is also a universal Jamison space. 
\end{Ex}

Our aim is to generalize Theorem \ref{hilbertjamisonuniversel} to a broader class of Banach spaces $X$, namely to the class of separable complex Banach spaces which admit an unconditional Schauder decomposition.

\subsection{Unconditional Schauder decompositions}
Let us now recall briefly a few known facts about unconditional Schauder decompositions.

\begin{Def}\label{decschauderinconditionnelle}
Let $X$ be a separable infinite-dimensional complex space. We say that $X$ admits an unconditional Schauder decomposition if there exists a sequence $(X_\ell)_{\ell\ge 1}$ of closed subspaces of $X$ (different from $\{0\}$) such that any vector $x$ of $X$ can be written in a unique way as an unconditionally convergent series $\sum_{\ell\ge 1}x_\ell$, where $x_\ell$ belongs to $X_\ell$ for all positive integers $\ell$. 
\end{Def}

There are many examples of spaces which admit an unconditional Schauder decomposition. For instance, it is clear that if $X$ has an unconditional Schauder basis, then $X$ admits an unconditional Schauder decomposition. Recall that the space $C([0,1])$ of continuous functions on $[0,1]$ is \textit{universal} in the sense that it contains an isometric copy of any separable Banach space. We denote by $c_0(\mathbb{N})$ the space of all complex sequences which converge to zero. 

\begin{Ex}\label{fonctionscontinues}
A separable complex Banach space $X$ admits an unconditional Schauder decomposition whenever it contains a complemented copy of a Banach space which admits an unconditional Schauder decomposition. In particular, if $X$ contains a copy of $c_0(\mathbb{N})$ then it admits an unconditional Schauder decomposition. For instance, the space $C([0,1])$ admits an unconditional Schauder decomposition.
\end{Ex}

\begin{proof}
In the first case, the fact that $X$ admits an unconditional Schauder decomposition follows directly from Definition \ref{decschauderinconditionnelle}. If $X$ contains a copy of $c_0(\mathbb{N})$, then a result of Sobczyk says that this copy is complemented in $X$. The last assertion comes from the fact that $C([0,1])$ is a universal space and so contains a copy of $c_0(\mathbb{N})$.
\end{proof}

\begin{Rem}
If $(X_\ell)_{\ell\ge 1}$ is an unconditional Schauder decomposition of $X$ and $(I_k)_{k\ge 1}$ is any partition of $\N$ into finite or infinite subsets, let $Y_k$ denote the closed linear span of the spaces $X_\ell$, where $\ell\in I_k$. Then $(Y_k)_{k\ge 1}$ is also an unconditional Schauder decomposition of $X$. Hence, we will always suppose in the sequel that $(X_\ell)_{\ell \ge 1}$ is an unconditional Schauder decomposition of $X$ with all the subspaces $X_\ell$ infinite-dimensional.
\end{Rem}

This assumption will allow us to define a weighted backward shift on a space which admits an unconditional Schauder decomposition. To do this, recall the notion of \textit{biorthogonal system}. We denote by $\delta_{i,j}$ the Kronecker symbol.

\begin{Def}(\cite[Definition 1.f.1.]{LindenstraussTzafriri})
Let $Z$ be a separable infinite-dimensional complex Banach space. For any positive integer $i$, let $z_i$ and $z_i^*$ be elements of $Z$ and $Z^*$ respectively. The sequence $\big((z_i)_{i\ge 1},(z_i^{*})_{i\ge 1}\big)$ is called a biorthogonal system in $Z$ if $\langle z_i^{*},z_j\rangle=\delta_{i,j}$ for any positive integers $i, j$.
\end{Def}

The following result will be fundamental in the sequel.

\begin{Theo}\emph{(\cite[Theorem 1.f.4]{LindenstraussTzafriri})}\label{theoremesystemebiorthogonal}
Let $Z$ be a separable infinite-dimensional complex Banach space. there exists a biorthogonal sequence $\big((z_i)_{i\ge 1},(z_i^{*})_{i\ge 1}\big)$ in $Z$ $($where $z_i\in Z$ and $z_i^{*}\in Z^{*})$ such that$:$\\
\noindent $(1)$ $\sup_{i\ge 1}\vert\vert z_i\vert\vert\,\vert\vert z_i^{*}\vert\vert<+\infty;$\\
\noindent $(2)$ the linear span of the vectors $z_i$, $i\ge 1$, is a dense subspace of $Z;$\\
\noindent $(3)$ for every vector $z$ of $Z$ such that $\langle z_i^{*},z\rangle=0$ for any positive integer $i$, $z=0$.
\end{Theo}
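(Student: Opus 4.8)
The plan is to construct the biorthogonal system by a standard inductive "gliding hump" / interleaving argument, using the separability of $Z$. Fix a countable dense subset $\{y_n\,;\,n\ge 1\}$ of $Z$, which we may assume consists of nonzero vectors with dense linear span. We will build the sequences $(z_i)_{i\ge 1}\subset Z$ and $(z_i^*)_{i\ge 1}\subset Z^*$ so that at each finite stage $i$ we have a biorthogonal system $\big((z_j)_{j\le i},(z_j^*)_{j\le i}\big)$, and so that the construction "catches" all of the $y_n$ in the closed span of the $z_j$ (giving condition $(2)$) and, dually, catches a norming sequence of functionals so that the only vector killed by all the $z_i^*$ is $0$ (giving condition $(3)$). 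The extra point — which is the reason one cannot simply quote the bare Markushevich theorem — is the uniform bound $(1)$: $\sup_i \|z_i\|\,\|z_i^*\| < +\infty$.

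The key steps, in order: (a) Set up two interlaced requirements at odd and even stages. At an odd stage, pick the first $y_n$ not yet in $\mathrm{span}(z_1,\dots,z_i)$; by Hahn--Banach choose a functional vanishing on $\mathrm{span}(z_1,\dots,z_i)$ but not on $y_n$, subtract off the components along the existing $z_j^*$ to restore biorthogonality with the old functionals, and normalize; then define the new $z_{i+1}$ by a Hahn--Banach functional separating $y_n$ from the current span, again corrected to be biorthogonal to the previous $z_j$. At an even stage, do the dual construction, picking a functional from a countable norming subset of $Z^*$ (which exists by separability of $Z$) not yet in the span of the $z_j^*$, and adjoining a corresponding vector. (b) Verify that the corrections preserve biorthogonality at every finite stage — this is the routine linear-algebra bookkeeping of subtracting $\sum_{j\le i}\langle z_j^*, v\rangle z_j$ and $\sum_{j\le i}\langle \cdot, z_j\rangle\,z_j^*$. (c) Verify density: since every $y_n$ eventually lies in $\mathrm{span}(z_1,\dots,z_i)$, and the $y_n$ are dense, condition $(2)$ holds. (d) Verify totality of $(z_i^*)$: since a norming sequence of functionals is contained in $\overline{\mathrm{span}}(z_i^*)$ (in the appropriate sense), any $z$ annihilated by all $z_i^*$ has norm $0$, giving $(3)$.

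For the uniform bound $(1)$, the trick is to control the norms \emph{as we normalize}: at each stage we are free to rescale the newly constructed pair $(z_{i+1}, z_{i+1}^*)$ by $(\alpha, \alpha^{-1})$ for any scalar $\alpha\ne 0$, which leaves biorthogonality and the spans untouched. Choosing $\alpha$ so that $\|z_{i+1}\| = \|z_{i+1}^*\|$ forces $\|z_{i+1}\|\,\|z_{i+1}^*\| = \|z_{i+1}\|^2 = \|z_{i+1}^*\|^2$; one then has to bound this common value. Here one uses that the "corrected" functional $z_{i+1}^*$ and vector $z_{i+1}$ can be taken with $\|z_{i+1}^*\| \le C$ and $\|z_{i+1}\|\,\|z_{i+1}^*\| \le C$ for a constant $C$ independent of $i$, by exploiting the Hahn--Banach freedom (the separating functional may be taken of norm $1$, and the separating distance bounded below after a suitable a priori normalization of the targets $y_n$); after the symmetric rescaling this yields $\sup_i \|z_i\|\,\|z_i^*\| < +\infty$.

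The main obstacle is precisely this last point — keeping $\|z_i\|\,\|z_i^*\|$ bounded uniformly in $i$ while simultaneously satisfying both the density requirement $(2)$ and the totality requirement $(3)$, since the correction terms $\sum_{j\le i}\langle z_j^*, v\rangle z_j$ accumulate more and more summands as $i$ grows and could in principle blow up the norms. One handles this by a careful choice of the order in which the targets $y_n$ are treated (e.g. treating $y_n$ only once the already-constructed finite system is "rich enough" that the correction term is small), so that the Hahn--Banach target distances stay bounded below; this is the heart of Theorem 1.f.4 of \cite{LindenstraussTzafriri}, whose proof we follow. In our paper we simply invoke this result, so we do not reproduce the estimate here.
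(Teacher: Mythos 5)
The paper offers no proof of this statement at all: it is quoted verbatim as Theorem 1.f.4 of Lindenstrauss--Tzafriri (the Ovsepian--Pe\l czy\'nski theorem on bounded, fundamental, total biorthogonal systems) and used as a black box, so there is nothing in the paper to compare your argument against; the only question is whether your sketch would actually yield the theorem, and in its present form it would not. The interleaved Hahn--Banach construction you describe is the standard way to produce a Markushevich basis, i.e.\ biorthogonality together with properties $(2)$ and $(3)$, and that part of the outline is fine. The genuine content of the theorem is property $(1)$, and this is exactly where your text stops being an argument: you correctly identify that the correction terms $\sum_{j\le i}\langle z_j^*,v\rangle z_j$ may accumulate, propose to handle this by ``treating $y_n$ only once the already-constructed finite system is rich enough,'' and then defer the estimate to the reference. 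That fix is not available: the norms of the corrected vector and functional are governed by $\bigl\Vert\sum_{j\le i}\langle z_j^*,\cdot\rangle z_j\bigr\Vert$, essentially the basis constant of the finite system built so far, and nothing in a greedy Hahn--Banach construction keeps this quantity bounded uniformly in $i$. The symmetric rescaling $(\alpha,\alpha^{-1})$ preserves the product $\Vert z_{i+1}\Vert\,\Vert z_{i+1}^*\Vert$, so it cannot repair an unbounded product either.

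The missing idea in the actual proof of Theorem 1.f.4 is a block-averaging device: one first builds a (possibly unbounded) fundamental total biorthogonal system, then groups it into finite blocks and applies to each block of length $n$ an orthogonal (or unitary) $n\times n$ matrix all of whose entries have modulus $n^{-1/2}$ (a normalized Walsh or Fourier matrix). Such a transformation preserves biorthogonality, the linear spans and totality, while spreading the mass so that every individual product $\Vert z_i\Vert\,\Vert z_i^*\Vert$ within the block is controlled by an average over the block, which can be made uniformly bounded. Without this step, or a substitute for it, your proposal establishes only the unbounded Markushevich basis, which is strictly weaker than what the main construction of the paper needs in condition \eqref{conditionsysteme}. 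Since the paper itself merely cites the result, the honest course is to do the same rather than to sketch a proof whose decisive estimate is left unproved.
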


For more informations on biorthogonal systems, we refer the reader to the book \cite{LindenstraussTzafriri}.

\subsection{The result}
We are now ready to prove our result about Jamison universal spaces. The proof of this result is very close to that of \cite[Theorem 2.1]{EisnerGrivaux}.

\begin{Theo}\label{theoremejamisonuniversel}
Let $X$ be a separable complex Banach space which admits an unconditional Schauder decomposition. Then $X$ is a universal Jamison space.
\end{Theo}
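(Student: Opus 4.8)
The plan is to follow closely the construction carried out in the proof of \cite[Theorem 2.1]{EisnerGrivaux} (that is, of Theorem \ref{hilbertjamisonuniversel}), replacing the canonical basis of $\ell_2(\N)$ by the data supplied by the unconditional Schauder decomposition of $X$ together with the biorthogonal systems of Theorem \ref{theoremesystemebiorthogonal}. First, a reduction: we may assume $n_0=1$ — otherwise replace $(n_k)_{k\ge 0}$ by $1,n_0,n_1,\dots$, the same bad operator still witnessing the failure of the Jamison property. Since $(n_k)_{k\ge 0}$ is not a Jamison sequence, condition $(2)$ of Theorem \ref{jamisonth} fails, i.e.\ for every $\varepsilon>0$ there are distinct $\lambda,\mu\in\T$ with $\sup_{k\ge 0}|\lambda^{n_k}-\mu^{n_k}|<\varepsilon$; writing $\nu=\lambda\overline{\mu}$ this is exactly the hypothesis of Theorem \ref{hilbertjamisonuniversel}: for every $\varepsilon>0$ there is $\nu\in\T\setminus\{1\}$ with $\sup_{k\ge 0}|\nu^{n_k}-1|<\varepsilon$. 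From this I would extract, as in \cite{EisnerGrivaux}, a perfect set $K\subseteq\T$ and a sequence $(\lambda_j)_{j\ge 1}$ dense in $K$ with $\sup_k|\lambda_j^{n_k}-1|$ decaying as fast as later estimates require and with the combinatorics of the $\lambda_j$ organised to fit the block scheme below. It then suffices to build $T\in\mathcal{B}(X)$ with $\sup_{k\ge 0}\|T^{n_k}\|<+\infty$ and $\sigma_p(T)\cap\T$ uncountable; in fact the construction should yield, as in \cite{EisnerGrivaux}, a perfectly spanning set of eigenvectors associated to unimodular eigenvalues.

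For the construction, by the Remark following Definition \ref{decschauderinconditionnelle} I may take the decomposition to be $(X_\ell)_{\ell\ge 1}$ with every $X_\ell$ infinite-dimensional (regrouped to match the block scheme), and in each $X_\ell$ I fix, by Theorem \ref{theoremesystemebiorthogonal}, a biorthogonal system $\big((x^{(\ell)}_i)_{i\ge 1},(x^{*(\ell)}_i)_{i\ge 1}\big)$ with $\sup_{\ell,i}\|x^{(\ell)}_i\|\,\|x^{*(\ell)}_i\|<+\infty$, with the $x^{(\ell)}_i$ of dense linear span in $X_\ell$ and the $x^{*(\ell)}_i$ total. I then set $T=D+S$. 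Here $D=\bigoplus_{\ell\ge 1}\mu_\ell I_{X_\ell}$ is the block-diagonal operator with unimodular coefficients $\mu_\ell$ chosen among the $\lambda_j$; since the decomposition is unconditional, $D$ and all its powers $D^{n}$ are bounded uniformly in $n$ (the block coefficients have modulus $1$), and, crucially, $\|D^{n_k}-I\|=\bigl\|\bigoplus_\ell(\mu_\ell^{n_k}-1)I_{X_\ell}\bigr\|$ is as small as we wish since $\sup_k|\mu_\ell^{n_k}-1|$ is. The operator $S$ is an off-diagonal (shift-like) part, defined coordinatewise through the biorthogonal systems — each $x^{(\ell)}_i$ is sent to a weighted combination of finitely many vectors of neighbouring blocks — with weights decaying so fast that the blockwise norms $\|S|_{X_\ell}\|$ are summable; $S$ is then bounded on $X$ directly by the triangle inequality (a harmless strengthening of the choices in \cite{EisnerGrivaux}, since here we only need $\sup_k\|T^{n_k}\|$ to be finite, not close to $1$). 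This $T$ is the exact analogue of the Hilbert-space operator, with the coordinate $e_n$ played by $x^{(\ell)}_i$.

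Solving $Tu=\lambda u$ for $u=\sum_\ell u_\ell$ ($u_\ell\in X_\ell$) leads, just as in the scalar case, to a recurrence for the coordinates of $u$ against the $x^{*(\ell)}_i$, determined from a datum in the first block; for $\lambda\in K$ the calibration of the off-diagonal weights against the $\mu_\ell$ makes the resulting series converge in $X$ to a genuine nonzero eigenvector $u(\lambda)$, with $\lambda\mapsto u(\lambda)$ continuous on $K$. Testing against the functionals $x^{*(\ell)}_i$ shows that every $x^{(\ell)}_i$ belongs to $\overline{\mathrm{span}}\{u(\lambda):\lambda\in K\}$, which is therefore dense in $X$ by Theorem \ref{theoremesystemebiorthogonal}; since $K$ is perfect and $\lambda\mapsto u(\lambda)$ is continuous, this upgrades to the perfectly spanning property, and in particular $\sigma_p(T)\cap\T\supseteq K$ is uncountable. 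The main obstacle, as in every construction of this kind, is the estimate $\sup_k\|T^{n_k}\|<+\infty$. Expanding $(D+S)^{n_k}$ on a block, the term involving $j$ applications of $S$ carries a $j$-fold composition of off-diagonal weights multiplied by a scalar sum $\sum_{a_0+\cdots+a_j=n_k-j}\mu_\ell^{a_j}\cdots\mu_{\ell-j}^{a_0}$, a complete homogeneous symmetric polynomial in $j+1$ of the $\mu$'s. A priori of size $\binom{n_k}{j}$, this sum is in fact $\lesssim\varepsilon$ divided by the separation constant of the $\mu$'s involved, uniformly in $n_k$, precisely because $|\mu^{n_k}-1|\le\varepsilon$ for each of them; it then remains to choose the $\mu_\ell$ and the off-diagonal weights so that the sum over $j$ is uniformly summable while keeping the eigenvector series convergent. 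Carrying this interplay between arithmetic cancellation and fast-decaying weights from the basis of $\ell_2(\N)$ over to the unconditional decomposition of $X$ is where essentially all of the work lies.
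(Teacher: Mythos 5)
Your proposal follows essentially the same route as the paper: the same reduction to $n_0=1$, the same operator $T=D+B$ (block-diagonal unimodular part plus a weighted backward shift built from the biorthogonal systems of Theorem \ref{theoremesystemebiorthogonal}), and the same transfer of the Eisner--Grivaux estimates to bound $\sup_{k}\Vert T^{n_k}\Vert$. The one divergence is your claim that the eigenvectors $u(\lambda)$ span all of $X$ densely: the paper does not attempt this, but instead restricts $T$ to the invariant subspace $\hat{X_1}=\overline{\mathrm{span}}\big[x_{1,\ell}\,;\,\ell\ge 1\big]$ and verifies the hypotheses of Theorem \ref{theospanning} there, which already gives an uncountable unimodular point spectrum via $\sigma_p(T_1)\cap\T\subset\sigma_p(T)\cap\T$ and avoids having to control the eigenvector series uniformly over all rows $i$ of the biorthogonal systems.
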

\begin{proof}
Let us fix an increasing sequence of positive integers $(n_k)_{k\ge 0}$ (with $n_0=1$) which is not a Jamison sequence. Our task is to construct a bounded linear operator $T$ on the space $X$ which is partially power-bounded with respect to the sequence $(n_k)_{k\ge 0}$ and such that the set $\sigma_p(T)\cap \T$ is uncountable. By definition of $X$, there exists a sequence $(X_\ell)_{\ell \ge 1}$ of infinite-dimensional closed subspaces of $X$ such that any vector $x$ of $X$ can be written in a unique way as an unconditionally convergent series $\sum_{\ell\ge 1}x_\ell$, where $x_\ell$ belongs to $X_\ell$ for all positive integers $\ell$. In particular, $X=\bigoplus_{\ell\ge 1}X_\ell$. Let us consider a biorthogonal system in each space $X_\ell$: according to Theorem \ref{theoremesystemebiorthogonal}, there exists a biorthogonal sequence $\big((x_{i,\ell})_{i\ge 1},(x_{i,\ell}^{*})_{i\ge 1}\big)$ in $X_\ell$ such that
\begin{equation}\label{conditionsysteme}
\vert\vert x_{i,\ell}\vert\vert=1\ \ \ \mathrm{for\ any\ }i\ge 1\ \ \ \mathrm{and}\ \ \ M_\ell:=\sup_{i\ge 1}\vert\vert x_{i,\ell}^{*}\vert\vert <+\infty.
\end{equation}
Our first task will be to define the operator $T$ and to prove that it is bounded on $X$. We will then study the eigenvectors of $T$ associated to the unimodular eigenvalues and give estimates of the norms $\vert\vert T^{n_p}\vert\vert$.

\noindent $\rhd$ \textbf{Definition of $T$.}\\
\noindent As in the proof of \cite[Theorem 2.1]{EisnerGrivaux}, we define $T$ as the sum of a diagonal operator and a weighted backward shift. The construction depends on two sequences $(\lambda_n)_{n\ge 1}$ and $(w_n)_{n\ge 1}$ which will be suitably chosen in the proof: $(\lambda_{n})_{n\ge 1}$ is a sequence of unimodular complex numbers which are all distinct and $(w_n)_{n\ge 1}$ is a sequence of positive weights. Let us first define the operator $D:X\longrightarrow X$ associated to the sequence $(\lambda_n)_{n\ge 1}$ by setting
\begin{equation*}
D\Bigg(\sum_{\ell \ge 1}x_\ell\Bigg)=\sum_{\ell\ge 1}\lambda_\ell x_\ell
\end{equation*}
for any vector $x=\sum_{\ell\ge 1}x_\ell$ of $X$. Since the decomposition $X=\bigoplus_{\ell \ge 1}X_\ell$ is unconditional, $D$ defines a bounded linear operator on $X$ which is partially power-bounded with respect to the sequence $(n_k)_{k\ge 0}$. Let us then define a weighted backward shift on $X$ by using the biorthogonal systems $\big((x_{i,\ell})_{i\ge 1},(x_{i,\ell}^{*})_{i\ge 1}\big)$. Since 
\begin{equation*}
X=X_\ell\bigoplus \overline{\mathrm{span}}\Bigg(\bigcup_{p\ne \ell}X_p\Bigg),
\end{equation*}
we first observe that we can extend the functionals $x_{i,\ell}^*$ to $X$ by setting $x_{i,\ell}^*=0$ on $\overline{\mathrm{span}}\big(\bigcup_{p\ne \ell}X_p\big)$. If we denote by $C>0$ the unconditional constant of the decomposition $X=\bigoplus_{\ell\ge 1}X_\ell$ then $\Vert x_{i,\ell}^*\Vert\le CM_\ell$ for any positive integers $i$ 	and $\ell$. Since $\langle x_{i,\ell}^*, x_{i,\ell}\rangle=1$ and $\Vert x_{i,\ell}\Vert=1$, we have $CM_\ell\ge 1$ for any positive integer $\ell$. Let $((w_{i,\ell})_{i\ge 1})_{\ell\ge 1}$ be a double sequence of positive real numbers which will be defined further on in the proof. Let finally $j: \N\setminus\{1\}\longrightarrow \N$ be a function which satisfies the two following conditions:
\begin{enumerate}
\item[$(i)$] for any $n\ge 2$, $j(n)<n$;
\item[$(ii)$] for any $k\ge 1$, the set $\{n\ge 2\,;\, j(n)=k\}$ is infinite.
\end{enumerate} 
Our weighted backward shift will be defined as
\begin{equation}\label{backwardshift}
Bx:=\sum_{\ell\ge 2}\sum_{i\ge 1}\langle x_{i,\ell}^{*},x\rangle \alpha_{i,\ell-1}x_{i,\ell-1}\qquad (x\in X)
\end{equation} 
where
\begin{equation}\label{poids1}
\alpha_{i,1}:=w_{i,1}\vert \lambda_2-\lambda_{j(2)}\vert\ \ \ \ \ \mathrm{for\ all\ }i\ge 1
\end{equation}
and
\begin{equation}\label{poids2}
\alpha_{i,\ell}:=w_{i,\ell}\frac{\vert \lambda_{\ell+1}-\lambda_{j(\ell+1)}\vert}{\vert \lambda_{\ell}-\lambda_{j(\ell)}\vert}\ \ \ \ \ \mathrm{for\ all\ }\ i\ge 1\ \mathrm{and\ }  \ell\ge 2,
\end{equation}
where $w_{i,\ell}>0$. We now choose the coefficients $\alpha_{i,\ell}$ in such a way that \eqref{backwardshift} defines a bounded linear operator on $X$. For any positive integers $i$ and $\ell$, let us define
\begin{equation}\label{poids}
w_{i,\ell}:=\frac{w_\ell}{2^i CM_{\ell+1}},
\end{equation}
where $w_\ell$ is an arbitrary positive real number. Under this condition, the series
\begin{equation*}
\mathcal{N}:=\sum_{\ell\ge 2}\sum_{i\ge 1}\alpha_{i,\ell-1}\,\vert\vert x_{i,\ell-1}\vert\vert\,\vert\vert x_{i,\ell}^{*}\vert\vert
\end{equation*}
is convergent. Indeed, conditions \eqref{conditionsysteme} and \eqref{poids} and Definitions \eqref{poids1} and \eqref{poids2} yields that
\begin{align*}
\mathcal{N}&=\sum_{\ell\ge 1} \sum_{i\ge 1}\alpha_{i,\ell}\,\vert\vert x_{i,\ell+1}^{*}\vert\vert\\
&\le w_1\vert \lambda_2-\lambda_{j(2)}\vert +\sum_{\ell=2}^{+\infty}w_\ell\frac{\vert \lambda_{\ell+1}-\lambda_{j(\ell+1)} \vert}{\vert \lambda_\ell-\lambda_{j(\ell)} \vert}\cdot
\end{align*}
For arbitrary weights $w_\ell>0$, we can take $\lambda_\ell$ sufficiently close to $\lambda_{j(\ell)}$ for any positive integer $\ell$ so that the series 
\begin{equation*}
\sum_{\ell\ge 2}w_\ell\frac{\vert \lambda_{\ell+1}-\lambda_{j(\ell+1)} \vert}{\vert \lambda_\ell-\lambda_{j(\ell)} \vert} 
\end{equation*}
is convergent. More precisely, we first choose $\lambda_3$ close to $\lambda_{j(3)}$ (recall that $j(3)\in \{1,2\}$ by definition of $j$) such that 
$$\vert \lambda_3-\lambda_{j(3)}\vert\le \frac{\vert \lambda_2-\lambda_1\vert}{2^3 w_2}$$
and $\lambda_3\notin \{\lambda_1,\lambda_2\}$. Then at step $\ell$ we choose $\lambda_\ell\in \T$ such that
$$
\vert \lambda_{\ell}-\lambda_{j(\ell)}\vert\le \frac{\vert \lambda_{\ell-1}-\lambda_{j(\ell-1)}\vert}{2^\ell w_{\ell-1}}
$$
and $\lambda_\ell\notin \{\lambda_1,\dots, \lambda_{\ell-1}\}$. Under these conditions the series $\mathcal{N}$ is convergent, so that $B$ is a nuclear operator. It follows that $T=D+B$ is a bounded linear operator on $X$.

\noindent $\rhd$ \textbf{Unimodular eigenvectors of the operator $T$.}\\
\noindent Consider the closed subspace
\begin{equation*}
\hat{X_1}:=\overline{\mathrm{span}}\big[x_{1,\ell}\,;\,\ell\ge 1\big]
\end{equation*}
of $X$. Since $Bx_{1,1}=0$ and $Bx_{1,\ell}=\alpha_{1,\ell-1}x_{1,\ell-1}$ for any positive integer $\ell$, $\hat{X_1}$ is $T$-invariant. Hence one can consider the operator
\begin{align*}
T_1: \hat{X_1}&\longrightarrow \hat{X_1}\\
x &\longmapsto T_1x:=Tx
\end{align*}
induced by $T$ on $\hat{X_1}$. Since the decomposition $X=\bigoplus_{\ell\ge 1}X_\ell$ is unconditional, the sequence $(x_{1,\ell})_{\ell \ge 1}$ is an unconditional basis of the space $\hat{X_1}$. Let us now describe the unimodular eigenvectors of the operator $T_1$: if $x=\sum_{\ell\ge 1}c_\ell x_{1,\ell}$ is a vector of $\hat{X_1}$, then the algebraic equation $T_1x=\lambda x$ is satisfied if and only if 
\begin{equation*}
c_\ell=\frac{(\lambda-\lambda_{\ell-1})\dots (\lambda-\lambda_1)}{\alpha_{1,\ell-1}\dots \alpha_{1,1}} c_1\ \ \ \ \ \mathrm{for\ every\ }\ell\ge 2.
\end{equation*}
It follows that for any positive integer $n$, the eigenspace $\mathrm{Ker}(T_1-\lambda_n)$ is $1$-dimensional and that $\mathrm{Ker}(T_1-\lambda_n)=\mathrm{span}\big[u_1^{(n)}\big]$, where
\begin{equation*}
u_1^{(n)}:=x_{1,1}+\sum_{\ell=2}^n\frac{(\lambda_n-\lambda_{\ell-1})\dots (\lambda_n-\lambda_1)}{\alpha_{1,\ell-1}\dots \alpha_{1,1}}x_{1,\ell}.
\end{equation*}
We now need the following fact whose proof can be found in \cite[Theorem 2.1]{EisnerGrivaux}.
\begin{Fac}\emph{(\cite[Lemma 2.4]{EisnerGrivaux})}\label{factspanning}
By choosing in a suitable way the coefficients $w_{1,n}$ and $\lambda_n$, it is possible to ensure that for any integer $n\ge 2$, 
$$
\big\vert\big\vert u_1^{(n)}-u_1^{(j(n))} \big\vert\big\vert\le 2^{-n}.
$$
\end{Fac}
\noindent In order to prove this fact, let us write
\begin{align*}
u_1^{(n)}-u_1^{(j(n))}&=\sum_{\ell=2}^{j(n)}\bigg(\frac{(\lambda_n-\lambda_{\ell-1})\dots (\lambda_n-\lambda_1)}{\alpha_{1,\ell-1}\dots \alpha_{1,1}}-\frac{(\lambda_{j(n)}-\lambda_{\ell-1})\dots (\lambda_{j(n)}-\lambda_1)}{\alpha_{1,\ell-1}\dots \alpha_{1,1}}\bigg)x_{1,\ell}\\
&+\sum_{\ell=j(n)+1}^n\frac{(\lambda_n-\lambda_{\ell-1})\dots (\lambda_n-\lambda_1)}{\alpha_{1,\ell-1}\dots \alpha_{1,1}}x_{1,\ell}:=a_1^{(n)}+b_1^{(n)}
\end{align*}
where we denote the first sum by $a_1^{(n)}$ and the second one by $b_1^{(n)}$. Since the quantities $\alpha_{1,\ell-1}\dots \alpha_{1,1}$ for $\ell\le j(n)$ do not depend from $\lambda_n$, we can ensure that $\vert\vert a_1^{(n)}\vert\vert\le 2^{-(n+1)}$ by taking $\vert \lambda_n-\lambda_{j(n)}\vert$ sufficiently small. Let us now estimate $b_1^{(n)}$. We can assume that $\vert\lambda_p-\lambda_q\vert \le 1$ for any positive integers $p$ and $q$. By using Definitions \ref{poids1} and \ref{poids2} of our weights $\alpha_{i,k}$, we have
\begin{align*}
\vert\vert b_1^{(n)}\vert\vert &\le \sum_{\ell=j(n)+1}^n \bigg\vert \frac{(\lambda_n-\lambda_{\ell-1})\dots (\lambda_n-\lambda_1)}{\alpha_{1,\ell-1}\dots \alpha_{1,1}} \bigg\vert\\
&\le \sum_{\ell=j(n)+1}^n\frac{1}{w_{1,\ell-1}\dots w_{1,1}}\cdot\bigg\vert\frac{\lambda_n-\lambda_{j(n)}}{\lambda_\ell-\lambda_{j(\ell)}}\bigg\vert\cdot
\end{align*}
In order to estimate $\vert\vert b_1^{(n)}\vert\vert$, we choose the coefficients $\lambda_n$ and $w_{1,\ell}$ as follows. At stage $n$ of the construction we take $w_{1,n-1}$ so large with respect to $w_{1,1},\dots, w_{1,n-2}$ so that $(w_{1,n-1}w_{1,n-2}\dots w_{1,1})^{-1}$ is very small (this means that we take $w_{n-1}$ very large by definition \eqref{poids} of $w_{1,n-1}$). After this we take $\lambda_n$ extremely close to $\lambda_{j(n)}$ so that the quantities 
$$
\frac{1}{w_{1,\ell-1}\dots w_{1,1}}\cdot\bigg\vert\frac{\lambda_n-\lambda_{j(n)}}{\lambda_\ell-\lambda_{j(\ell)}}\bigg\vert\qquad (\ell\in \{j(n)+1,\dots, n-1\})
$$
are very small. We can thus ensure that $\vert\vert b_1^{(n)}\vert\vert$ is less than $2^{-(n+1)}$ and the result is proved.

Fact \ref{factspanning} is useful to apply the following theorem which can also be found in \cite{Grivaux}.
\begin{Theo}\emph{(\cite[Theorem 4.2]{Grivaux})}\label{theospanning}
Let $Y$ be a separable infinite-dimensional complex Banach space and let $S$ be a bounded linear operator on $Y$. Suppose that there exists a sequence $(u_i)_{i\ge 1}$ of vectors of $Y$ having the following properties$:$\\
\noindent $(i)$ for each $i\ge 1$, $u_i$ is an eigenvector of $S$ associated to an eigenvalue $\mu_i$ of $S$ where $\vert \mu_i\vert=1$ and the complex numbers $\mu_i$ are all distinct$;$\\
\noindent $(ii)$ $\mathrm{span}\big[u_i\,;\, i\ge 1\big]$ is dense in $Y;$\\
\noindent $(iii)$ for any $i\ge 1$ 	and any $\varepsilon>0$, there exists an $n\ne i$ such that $\vert\vert u_n-u_i\vert\vert<\varepsilon$.\\
\noindent Then $S$ has a perfectly spanning set of eigenvectors associated to unimodular eigenvalues. In particular, the set $\sigma_p(S)\cap \T$ is uncountable.
\end{Theo}
Applying Theorem \ref{theospanning} with $Y:=\hat{X_1}$, $S:=T_1$ and $u_i:=u_1^{(i)}$, we obtain that $T_1$ has a perfectly spanning set of eigenvectors associated to unimodular eigenvalues (the only condition we really need to check in this theorem is condition $(iii)$ and this is true by Fact \ref{factspanning}). Hence the unimodular point spectrum of $T_1$ is uncountable. Since $\sigma_p(T_1)\cap \mathbb{T}\subset\sigma_p(T)\cap\mathbb{T}$, the unimodular point spectrum of $T$ is also uncountable. For this part of the proof, we also refer the reader to the proof of \cite[Proposition 2.5]{EisnerGrivaux}.

\noindent $\rhd$ \textbf{Estimate of the norms $\vert\vert T^{n_k}\vert\vert$.}\\
\noindent As in the \textit{hard part} of the proof of \cite[Theorem 2.1]{EisnerGrivaux}, we now need to prove that if the coefficients $w_n$ and $\lambda_n$ are suitably chosen, $\vert\vert T^{n_k}-D^{n_k}\vert\vert\le 1$ for all positive integers $k$. For all positive integers $k,\ell, i, n$, we put
\begin{equation*}
t_{k,\ell}^{(i,n)}=\langle x_{i,k}^{*},T^nx_{i,\ell}\rangle. 
\end{equation*}
It is then clear that $t_{k,\ell}^{(i,n)}=0$ when $k>\ell$ or $\ell-k>n$. Moreover $t_{k,k}^{(i,n)}=\lambda_k^n$. The lemma below gives the expression of $t_{k,\ell}^{(i,n)}$ for $1\le \ell-k\le n$.
\begin{Lem}\label{expressiont}
For any positive integers $k, \ell,i, n\ge 1$ such that $1\le \ell-k\le n$, we have
\begin{equation*}
t_{k,\ell}^{(i,n)}=\alpha_{i,\ell-1}\alpha_{i,\ell-2}\dots\alpha_{i,k}\sum_{j_k+\dots+ j_{\ell}=n-(\ell-k)}\lambda_k^{j_k}\dots \lambda_\ell^{j_\ell}.
\end{equation*}
\end{Lem}
\begin{proof}
For any fixed positive integer $i$, define the operator $B_i$ on the space 
$$
\hat{X}_i:=\overline{\mathrm{span}}\big[x_{i,\ell}\,;\, \ell\ge 1\big]
$$ 
by setting $B_ix_{i,\ell}:=Bx_{i,\ell}=\alpha_{i,\ell-1}x_{i,\ell-1}$ if $\ell\ge 2$ and $B_i x_{i,1}=0$. Then $B_i$ is a weighted backward shift as in the proof of \cite[Theorem 2.1]{EisnerGrivaux}. Moreover, since the decomposition $X=\bigoplus_{\ell\ge 1}X_\ell$ is unconditional, the sequence $(x_{i,\ell})_{\ell \ge 1}$ is an unconditional Schauder basis of $\hat{X}_i$ and any vector $x$ of $\hat{X}_i$ can be written in a unique way as $x=\sum_{\ell \ge 1}\langle x_{i,\ell}^{*},x\rangle x_{i,\ell}$. It follows that
\begin{equation*}
B_ix=\sum_{\ell \ge 2}\langle x_{i,\ell}^{*},x\rangle\alpha_{i,\ell-1}x_{i,\ell-1}.
\end{equation*}
Then, if we fix the index $i$, we can apply \cite[Lemma 2.6]{EisnerGrivaux} which gives us the expression of the coefficient $t_{k,\ell}^{(i,n)}$.
\end{proof}
\noindent We now want an estimate of $\vert\vert T^{n_p}-D^{n_p}\vert\vert$. In every subspace $X_\ell$ of $X$, we consider the linear subspace 
\begin{equation*}
\tilde{X_\ell}:=\mathrm{span}\big[x_{i,\ell}\,;\, i\ge 1\big].
\end{equation*}
Since $\tilde{X_\ell}$ contains all the vectors $x_{i,\ell}$ ($i\ge 1$), $\tilde{X_\ell}$ is a dense subspace of $X_\ell$. It follows that the direct sum $\tilde{X}:=\bigoplus_{\ell\ge 1}\tilde{X_\ell}$ is dense in $X$. In order to prove our result it suffices to establish that for any vector $x$ of $\tilde{X}$, $\vert\vert (T^{n_p}-D^{n_p})x\vert\vert\le \vert\vert x \vert\vert$. Let $x$ be an element of $\tilde{X}$. Then $x$ can be written as $x=\sum_{\ell\ge 1}x_\ell$ where
\begin{equation*}
x_\ell=\sum_{i=1}^{i_\ell}\langle x_{i,\ell}^{*},y\rangle x_{i,\ell}\in X_\ell
\end{equation*}
where $i_\ell$ is a positive integer for all $\ell\ge 1$. We have then, for any $n\ge 2$,
\begin{align*}
T^n x=\sum_{\ell=1}^{+\infty}\sum_{i=1}^{i_\ell}\langle x_{i,\ell}^{*},x\rangle T^nx_{i,\ell}&=\sum_{\ell=1}^{+\infty}\sum_{i=1}^{i_\ell}\langle x_{i,\ell}^{*},x\rangle \sum_{k=\max(1,\ell-n)}^{\ell}\langle x_{i,k}^{*},T^nx_{i,\ell}\rangle x_{i,k}\\
&=\sum_{\ell=1}^{+\infty}\sum_{i=1}^{i_\ell}\langle x_{i,\ell}^{*},x\rangle \sum_{k=\max(1,\ell-n)}^{\ell}t_{k,\ell}^{(i,n)}x_{i,k}
\end{align*}
and
\begin{equation*}
D^nx=\sum_{\ell=1}^{+\infty}\lambda_\ell^n\sum_{i=1}^{i_\ell}\langle x_{i,\ell}^{*},x\rangle x_{i,\ell}=\sum_{\ell=1}^{+\infty}\sum_{i=1}^{i_\ell}\langle x_{i,\ell}^{*},x\rangle t_{\ell,\ell}^{(i,n)}x_{i,\ell}. 
\end{equation*}
We deduce from this that
\begin{equation*}
(T^n-D^n)x=\sum_{\ell=2}^{+\infty}\sum_{i=1}^{i_\ell}\langle x_{i,\ell}^{*},x\rangle \sum_{k=\max(1,\ell-n)}^{\ell-1}t_{k,\ell}^{(i,n)}x_{i,k}
\end{equation*}
and according to condition \eqref{conditionsysteme}, we get that
\begin{align*}
\vert\vert(T^n-D^n)x\vert\vert &\le \vert\vert x \vert\vert \sum_{\ell=2}^{+\infty}\sum_{i=1}^{i_\ell}\vert\vert x_{i,\ell}^{*}\vert\vert \Bigg(\sum_{k=\max(1,\ell-n)}^{\ell-1}\big\vert t_{k,\ell}^{(i,n)} \big\vert\Bigg)\\
&\le C\vert\vert x \vert\vert \sum_{\ell=2}^{+\infty}M_\ell\Bigg(\sum_{i=1}^{i_\ell}\sum_{k=\max(1,\ell-n)}^{\ell-1}\big\vert t_{k,\ell}^{(i,n)}\big\vert\Bigg).
\end{align*}
We now know from Lemma \ref{expressiont} that
\begin{equation*}
t_{k,\ell}^{(i,n)}=\alpha_{i,\ell-1}\alpha_{i,\ell-2}\dots\alpha_{i,k}s_{k,\ell}^{(n)},
\end{equation*}
where 
$$
s_{k,\ell}^{(n)}:=\sum_{j_k+\dots+ j_{\ell}=n-(\ell-k)}\lambda_k^{j_k}\dots \lambda_\ell^{j_\ell}.
$$
According to the expressions \eqref{poids2} and \eqref{poids} of our coefficients $\alpha_{i,\ell}$ (and $w_{i,\ell}$), this yields that
\begin{equation*}
t_{k,\ell}^{(i,n)}=\frac{w_{\ell-1}w_{\ell-2}\dots w_k}{2^{(\ell-k)i}CM_\ell CM_{\ell-1}\dots CM_{k+1}}\cdot\frac{\vert\lambda_\ell-\lambda_{j(\ell)}\vert}{\vert \lambda_k-\lambda_{j(k)}\vert}\cdot s_{k,\ell}^{(n)}.
\end{equation*}
As $CM_p\ge 1$ for any positive integer $p$, this implies in particular that
\begin{equation*}
\big\vert t_{k,\ell}^{(i,n)}\big\vert\le \frac{w_{\ell-1}w_{\ell-2}\dots w_k}{2^{(\ell-k)i}CM_\ell}\cdot \frac{\vert\lambda_\ell-\lambda_{j(\ell)}\vert}{\vert \lambda_k-\lambda_{j(k)}\vert}\cdot \big\vert s_{k,\ell}^{(n)}\big\vert
\end{equation*}
and then
\begin{equation}\label{densityargument}
\vert\vert (T^{n_p}-D^{n_p})x\vert\vert\le \vert\vert x \vert\vert \sum_{\ell=2}^{+\infty}\sum_{k=\max(1,\ell-n_p)}^{\ell-1}w_{\ell-1}\dots w_k\cdot
\frac{\vert\lambda_\ell-\lambda_{j(\ell)}\vert}{\vert \lambda_k-\lambda_{j(k)}\vert} \cdot\big\vert s_{k,\ell}^{(n_p)}\big\vert.
\end{equation}
It remains to estimate the quantity
\begin{equation}\label{sommepetite}
\sum_{k=\max(1,\ell-n_p)}^{\ell-1}w_{\ell-1}\dots w_k\cdot \frac{\vert\lambda_\ell-\lambda_{j(\ell)}\vert}{\vert \lambda_k-\lambda_{j(k)}\vert} \cdot\big\vert s_{k,\ell}^{(n_p)}\big\vert,
\end{equation}
which is essentially the difficult part of the proof of \cite[Theorem 2.1]{EisnerGrivaux}. It is in this part of the proof where we use the fact that $(n_k)_{k\ge 1}$ is not a Jamison sequence: according to Theorem \ref{jamisonth} this assumption means that for every $\varepsilon>0$ and every $\lambda\in \T$, there exists $\lambda'\in \T\setminus\{\lambda\}$ such that $\vert\lambda-\lambda'\vert\le d_{(n_k)}(\lambda,\lambda')\le \varepsilon$. In their proof, T. Eisner and S. Grivaux prove that if we take $\lambda_\ell$ sufficiently close to $\lambda_{j(\ell)}$ (for the distance $d_{(n_k)}$ hence for the Euclidean norm $\vert\cdot\vert$), then the sum
\begin{equation*}
\sum_{k=\max(1,\ell-n_p)}^{\ell-1}w_{\ell-1}^2\dots w_k^2\cdot \frac{\vert\lambda_\ell-\lambda_{j(\ell)}\vert^2}{\vert \lambda_k-\lambda_{j(k)}\vert^2} \cdot\big\vert s_{k,\ell}^{(n_p)}\big\vert^2
\end{equation*}
can be made arbitrarily small. By rewriting the proof, one can see that if we take in the same way $\lambda_\ell$ close to $\lambda_{j(\ell)}$, the sum \eqref{sommepetite} is less than $2^{1-\ell}$. By density, we deduce from \eqref{densityargument} that $\vert\vert T^{n_p}-D^{n_p}\vert\vert\le 1$ for any positive integer $p$, which concludes the proof of the theorem.
\end{proof}

Example \ref{fonctionscontinues} allows us to give new examples of universal Jamison spaces.

\subsection{Examples}

Recall that the James space $\mathcal{J}$ is the set of all complex sequences $x=(x_n)_{n\ge 1}$ belonging the space $c_0(\N)$ such that \begin{equation*}
\vert\vert x \vert\vert_\mathcal{J}:=\sup \Big\{\vert x_{p_1}-x_{p_2}\vert^2+\dots +\vert x_{p_{k-1}}-x_{p_k}\vert^2 \Big\}<+\infty,
\end{equation*}
where the supremum is taken over all the $k$-tuples $(p_1,\dots,p_k)$ of positive integers such that $p_1<\dots<p_k$. We refer the reader to the book \cite{Fetter} for more information on the James space. In particular, it is not difficult to see that the subspace 
\begin{equation*}
\mathcal{J}_2=\big\{ x\in \mathcal{J}\,;\, x_{2k}=0\ \mathrm{for\ all\ }k\ge 1 \big\}
\end{equation*}
of $\mathcal{J}$ is linearly isomorphic to the Hilbert space $\ell_2(\N)$ and that this subspace is complemented in $\mathcal{J}$. Then Example \ref{fonctionscontinues} gives us the following result.

\begin{Ex}
The James space $\mathcal{J}$ is a universal Jamison space.
\end{Ex}

Other examples of universal Jamison spaces are given by spaces which contain a copy of $c_0(\N)$. For instance, the space $C([0,1])$ is a universal Jamison space. On the other hand, it is not difficult to exhibit a class of Banach spaces which are not universal Jamison spaces, namely the class of \textit{hereditarily indecomposable Banach spaces}.

\begin{Def}
An infinite-dimensional Banach space $X$ is said to be \textit{decomposable} if there exists two infinite-dimensional closed subspaces $Y$ and $Z$ of $X$ such that $X=Y\oplus Z$. We say that $X$ is \textit{hereditarily indecomposable} if no infinite-dimensional closed subspace of $X$ is decomposable.
\end{Def}

The famous Gowers dichotomy highlights the fact that the notion of unconditional Schauder decomposition is in a sense opposite to the notion of hereditarily indecomposable space. More precisely, Gowers \cite{Gowers} proves that if $X$ is an arbitrary Banach space it either contains an unconditional basic sequence or contains a hereditarily indecomposable subspace. In fact, a hereditarily indecomposable complex Banach space $X$ fails to be a universal Jamison space. Indeed, a bounded linear operator $T$ on this space $X$ is of the form $\lambda I+S$ where $\lambda\in \C$ and $S$ is a strictly singular operator, that is an operator $S\in \mathcal{B}(X)$ which fails to be an isomorphism when restricted to any infinite-dimensional closed subspace of $X$. In particular, the unimodular point spectrum of $T$ is at most countable (see \cite{Maurey} for more details). We have thus proved:

\begin{Prop}
If $X$ is a hereditarily indecomposable Banach space, then $X$ is not a universal Jamison space.
\end{Prop}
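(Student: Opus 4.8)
The plan is to prove something substantially stronger than the statement: that on a hereditarily indecomposable complex Banach space $X$, \emph{every} bounded linear operator $T$ has an at most countable unimodular point spectrum, irrespective of any (partial) power-boundedness. Once this is established, it suffices to exhibit a single increasing sequence of positive integers which is not a Jamison sequence — such sequences exist (this is classical; by Theorem~\ref{jamisonth} one only has to violate the $\varepsilon$-separation condition~$(2)$, which a sufficiently lacunary sequence does) — and then the defining property of a universal Jamison space (Definition~\ref{defjamisonuniversel}) fails for that sequence in the most drastic possible way.

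For the stronger statement, the first step is to invoke the structure theory of operators on complex HI spaces (see \cite{Maurey}): any $T\in\mathcal{B}(X)$ decomposes as $T=\lambda I+S$ with $\lambda\in\C$ and $S$ strictly singular. Since $\mathrm{Ker}(T-\mu)=\mathrm{Ker}\big(S-(\mu-\lambda)I\big)$ for every $\mu\in\C$, the translation $\mu\mapsto\mu-\lambda$ carries $\sigma_p(T)\cap\T$ onto $\sigma_p(S)\cap(\T-\lambda)$, so it is enough to bound the point spectrum of $S$. The second step is to apply the Riesz--Schauder theory available for strictly singular (indeed, inessential) perturbations of a scalar: for every $\nu\ne 0$ the operator $\nu I-S$ is Fredholm of index $0$, whence the set of $\nu\in\C\setminus\{0\}$ for which it fails to be invertible is a discrete subset of $\C\setminus\{0\}$ made up of eigenvalues of finite algebraic multiplicity. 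Consequently $\sigma_p(S)\setminus\{0\}$ is at most countable, hence so is $\sigma_p(S)$, and therefore $\sigma_p(T)\cap\T$ is at most countable for every $T\in\mathcal{B}(X)$.

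Putting the two steps together finishes the argument: for any non-Jamison sequence $(n_k)_{k\ge 0}$ there can be no $T\in\mathcal{B}(X)$ that is partially power-bounded with respect to $(n_k)_{k\ge 0}$ and has an uncountable unimodular point spectrum, simply because no operator on $X$ has an uncountable unimodular point spectrum at all; thus $X$ is not a universal Jamison space. The only genuinely non-trivial ingredient — and hence the main obstacle — is the first step, namely the Gowers--Maurey description of $\mathcal{B}(X)$ for $X$ hereditarily indecomposable, together with the standard Fredholm-theoretic fact that strictly singular operators behave spectrally like compact ones away from $0$; the ``Jamison-sequence'' half of the argument is essentially vacuous, since on an HI space the conclusion of Jamison's theorem holds unconditionally.
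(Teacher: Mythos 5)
Your proof is correct and follows essentially the same route as the paper: the Gowers--Maurey decomposition $T=\lambda I+S$ with $S$ strictly singular, and the Riesz--Fredholm theory for strictly singular operators forcing $\sigma_p(T)\cap\T$ to be at most countable for \emph{every} $T\in\mathcal{B}(X)$. You merely supply the Fredholm details (which the paper delegates to \cite{Maurey}) and the observation that non-Jamison sequences exist, both of which are welcome but not a change of method.
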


\section{Jamison sequences for $C_0$-semigroups}
In this section, we study Jamison sequences in the context of strongly continuous semigroups acting on separable complex Banach spaces. Starting from the work of C. Badea and S. Grivaux (see \cite{BadeaGrivaux1} and \cite{BadeaGrivaux2}), we give a characterization of Jamison sequences for $C_0$-semigroups (Definition \ref{jamisonsemigroupes} below). \\
We begin by recalling some definitions and facts about $C_0$-semigroups. Let $X$ be a complex Banach space. A family of bounded linear operators $(T_t)_{t\ge 0}$ on $X$ is called a $C_0$-\textit{semigroup} if\\
\noindent $\bullet$ $T_0=Id_X$;\\
\noindent $\bullet$ for any $s,t\ge 0$, $T_{s+t}=T_s T_t$;\\
\noindent $\bullet$ for any $x\in X$, $\underset{t\to 0^+}{\lim}\vert\vert T_tx-x\vert\vert=0$.\\
\noindent The \textit{infinitesimal generator} of the $C_0$-semigroup $(T_t)_{t\ge 0}$ is the map $A: D(A) \longrightarrow X$ defined by
\begin{align*}
D(A)&:=\bigg\{ x\in X\,;\, \underset{t\to 0^{+}}{\lim}\frac{T_tx-x}{t}\ \mathrm{exists}\bigg\},\\
Ax&:=\underset{t\to 0^{+}}{\lim}\frac{T_tx-x}{t}\ \ \ (x\in D(A)).
\end{align*}
We recall that $\sigma_p(T_t)\setminus\{0\}=\exp(t\sigma_p(A))$ for any $t\ge 0$ (see for instance \cite[Chapter 4]{EngelNagel}) and that
\begin{equation*}
\sigma_p(T_1)\cap \mathbb{T}=\exp(\sigma_p(A)\cap i\R).
\end{equation*}

We now introduce the notion of \textit{semigroup partially bounded with respect to some sequence of positive real numbers}.

\begin{Def}
Let $X$ be a separable complex Banach space. Let $(t_k)_{k\ge 0}$ be an increasing sequence of positive real numbers and $(T_t)_{t\ge 0}$ a semigroup of bounded linear operators on $X$. We say that $(T_t)_{t\ge 0}$ is \textit{partially bounded with respect to the sequence} $(t_k)_{k\ge 0}$ if $\sup_{k\ge 0}\vert\vert T_{t_k}\vert\vert<+\infty$.
\end{Def}

When the sequence of positive real numbers $(t_k)_{k\ge 0}$ is bounded, then any $C_0$-semigroup is bounded with respect to the sequence $(t_k)_{k\ge 0}$. So we restrict ourselves in the sequel to increasing sequences $(t_k)_{k\ge 0}$ of positive real numbers which tend to infinity. Moreover, it is clear that we always can 	make the assumption that $t_0=1$.

\begin{Def}\label{jamisonsemigroupes}
Let $(t_k)_{k\ge 0}$ be an increasing sequence of positive real numbers which tends to infinity. We say that $(t_k)_{k\ge 0}$ is a \textit{Jamison sequence for} $C_0$-\textit{semigroups} if for every separable complex Banach space $X$ and for every $C_0$-semigroup $(T_t)_{t\ge 0}$ of bounded linear operators on $X$ (with infinitesimal generator $A$) which is partially bounded with respect to the sequence $(t_k)_{k\ge 0}$, the set $\sigma_p(A)\cap i\R$ is at most countable.
\end{Def}

Our aim in this section is to give a characterization of Jamison sequences for $C_0$-semigroups. In order to do this, we will need the characterization of Jamison sequences which was obtained in \cite[Theorem 2.1]{BadeaGrivaux2}. Let us introduce the function $\vert\vert \cdot\vert\vert$ defined by
\begin{equation*}
\vert\vert \theta\vert\vert :=\mathrm{dist}(\theta,\Z)=\inf\big\{\vert \theta-n\vert\,;\, n\in\Z \big\}.
\end{equation*}
Recall that there exists two constants $C_1>0$ and $C_2>0$ such that
\begin{equation*}
C_2\vert\vert \theta\vert\vert\le \big\vert e^{2i\pi\theta}-1\big\vert\le C_1\vert\vert \theta \vert\vert.
\end{equation*}
for any real number $\theta$. We are going to prove the following result.

\begin{Theo}\label{caracterisationjamisonsemigroupes}
Let $(t_k)_{k\ge 0}$ be an increasing sequence of positive real numbers such that $t_0=1$ and $t_k\longrightarrow +\infty$. The following assertions are equivalent$:$\\
\noindent $(1)$ the sequence $(t_k)_{k\ge 0}$ is a Jamison sequence for $C_0$-semigroups$;$\\
\noindent $(2)$ there exists $\varepsilon>0$ such that for any $\theta\in \big]0,\frac{1}{2}\big]$,
 \begin{equation*}
\sup_{k\ge 0}\vert\vert t_k\theta\vert\vert\ge \varepsilon.
\end{equation*}
\end{Theo}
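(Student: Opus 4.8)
The strategy is to reduce the semigroup statement to the discrete Jamison characterization (Theorem \ref{jamisonth}) by discretizing a $C_0$-semigroup at an appropriate time step. The two directions are very different in difficulty, so I would handle them separately.

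For the implication $(2)\Rightarrow(1)$, suppose $(t_k)_{k\ge 0}$ satisfies the separation condition and let $(T_t)_{t\ge 0}$ be a $C_0$-semigroup, with generator $A$, partially bounded with respect to $(t_k)_{k\ge 0}$, so that $M:=\sup_{k\ge 0}\|T_{t_k}\|<+\infty$. I want to show $\sigma_p(A)\cap i\R$ is at most countable. Fix a small parameter $\tau>0$ (to be chosen) and consider the single operator $S:=T_\tau$. If $is\in\sigma_p(A)$ with $s\in\R$, then $e^{i\tau s}\in\sigma_p(S)\cap\T$, and the point is that the map $s\mapsto e^{i\tau s}$ is countable-to-one on each bounded interval, so it suffices to show that $\{e^{i\tau s}\,;\,is\in\sigma_p(A)\}$ is at most countable, i.e. that $S$ has at most countably many unimodular eigenvalues. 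To apply Theorem \ref{jamisonth} to $S$ I need an increasing sequence of positive integers $(m_j)$ with $m_0=1$ along which $\|S^{m_j}\|=\|T_{m_j\tau}\|$ stays bounded. Here I would choose $\tau$ and the integers $m_j$ so that $m_j\tau$ approximates the $t_k$'s: concretely, since the $t_k$ tend to infinity, for a suitable $\tau>0$ one can find integers $m_j$ with $|m_j\tau-t_{k_j}|$ bounded (for a subsequence $k_j$), and then $\|T_{m_j\tau}\|\le \|T_{m_j\tau-t_{k_j}}\|\cdot\|T_{t_{k_j}}\|$ or $\|T_{t_{k_j}-m_j\tau}\|^{-1}$-type estimates, combined with the local boundedness of $t\mapsto\|T_t\|$ on compacts, give $\sup_j\|S^{m_j}\|<+\infty$. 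The sequence $(m_j)$ inherits the non-Jamison/Jamison dichotomy from $(t_k)$: one must verify that the separation condition $(2)$ for $(t_k\theta)$ translates, for the right $\tau$, into the separation condition of Theorem \ref{jamisonth}(2) for the integer sequence $(m_j)$ applied to $\lambda=e^{2i\pi\varphi}$, $\mu=1$, using the equivalence $C_2\|\varphi\|\le|e^{2i\pi\varphi}-1|\le C_1\|\varphi\|$. Then Theorem \ref{jamisonth} gives that $\sigma_p(S)\cap\T$ is countable, hence so is $\sigma_p(A)\cap i\R$.

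For the contrapositive of $(1)\Rightarrow(2)$, suppose $(2)$ fails: for every $\varepsilon>0$ there is $\theta_\varepsilon\in\,]0,\tfrac12]$ with $\sup_{k\ge 0}\|t_k\theta_\varepsilon\|\le\varepsilon$. I must produce a separable complex Banach space $X$, a $C_0$-semigroup $(T_t)_{t\ge 0}$ on $X$ with generator $A$, partially bounded with respect to $(t_k)$, and with $\sigma_p(A)\cap i\R$ uncountable. The natural move is to build a diagonal semigroup: take a sequence of purely imaginary numbers $(i s_n)_{n\ge 1}$, dense enough in an interval of $i\R$, and set $T_t$ to be (a perturbation of) the diagonal semigroup $\mathrm{diag}(e^{ts_n i})$ on $\ell_2(\N)$ or on a space with an unconditional Schauder decomposition; its generator has each $is_n$ as eigenvalue. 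The failure of $(2)$ is exactly what lets one choose the $s_n$ so that $\sup_k\|T_{t_k}\|$ stays bounded while still having uncountably many eigenvalues — indeed, writing $s_n=2\pi\theta_{\varepsilon_n}\cdot(\text{scaling})$, the hypothesis $\sup_k\|t_k\theta_{\varepsilon_n}\|\le\varepsilon_n$ controls $|e^{it_k s_n}-1|$. In fact the cleanest route is to invoke Theorem \ref{hilbertjamisonuniversel} (or its Banach-space generalization, Theorem \ref{theoremejamisonuniversel}): one checks that the failure of $(2)$ produces, for each $\varepsilon>0$, a unimodular $\mu\ne1$ with $\sup_k|\mu^{?}-1|$ small along a relevant integer sequence, thereby showing a suitable integer sequence derived from $(t_k)$ is not a Jamison sequence; the resulting power-bounded operator $T$ with uncountable unimodular point spectrum is then "embedded" into a $C_0$-semigroup via $T_t:=$ (the semigroup generated by $\log$-type functional calculus, or directly via a diagonal construction mirroring the eigenvalue structure), yielding a generator $A$ with $\sigma_p(A)\cap i\R$ uncountable.

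**Main obstacle.** The delicate point is the interface between the \emph{real} time-scale $(t_k)$ and the \emph{integer} iterate structure required by Theorem \ref{jamisonth}: one must choose the discretization step $\tau$ (equivalently, the imaginary parts $is_n$) so that the approximate-multiples relation between $\{m_j\tau\}$ and $\{t_k\}$ simultaneously (a) preserves partial boundedness — this needs local boundedness of $t\mapsto\|T_t\|$ plus care that $m_j\tau-t_{k_j}$ does not drift — and (b) transports the $\varepsilon$-separation condition $\sup_k\|t_k\theta\|\ge\varepsilon$ faithfully to the integer side. Getting both at once, with the same $\tau$, and dealing with the fact that the $t_k$ need not be commensurable with any fixed $\tau$, is where the real work lies; the construction of the semigroup in the converse direction is then essentially a diagonal/perturbation argument running in parallel to the proof of Theorem \ref{theoremejamisonuniversel}.
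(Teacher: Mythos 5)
Your overall architecture coincides with the paper's in the forward direction: the paper also reduces assertion $(1)$ to the discrete characterization by discretizing at integer times, using $n_k=\lfloor t_k\rfloor$ and $n_k+1$ (i.e.\ your $\tau=1$, composed only forwards), proving the integer case separately (Theorem \ref{jamisoncontinu}) and then transferring the arithmetic condition. For the converse you propose building $T$ from the discrete theory and exponentiating $\mathrm{Log}\,T$; this differs from the paper's proof of $(1)\Rightarrow(3)$ in Theorem \ref{jamisoncontinu}, which instead constructs a translation semigroup on a renormed $L^2$-type space and uses the separable-pseudometric criterion of \cite[Theorem 2.8]{BadeaGrivaux2}, but it is exactly the device the paper uses later for Theorem \ref{espacejamisonuniverselsemigroupes}, so the route is viable --- provided you arrange $\sigma(T)$ to lie in a region where $\mathrm{Log}$ is analytic (the paper does this by putting all the $\lambda_n$ on a short arc near $1$ and forcing $\Vert B\Vert<\frac13$), a point your sketch does not address. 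A bare diagonal semigroup, without the perturbation, only yields countably many eigenvalues.

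The genuine gap is the arithmetic transfer between the real-time condition $\sup_k\Vert t_k\theta\Vert\ge\varepsilon$ and the integer-time condition $\sup_k\vert\lambda^{n_k}-1\vert\ge\varepsilon$, which you explicitly flag as ``where the real work lies'' (and leave as ``$\mu^{?}$'') but never carry out. The step is not automatic: for $\theta\in\big]0,\frac12\big]$ one has $\Vert n_k\theta\Vert\le\Vert t_k\theta\Vert+(t_k-n_k)\theta$, and the error term $(t_k-n_k)\theta$ can be as large as $\frac12$, so smallness of $\sup_k\Vert t_k\theta\Vert$ does \emph{not} in general give smallness of $\sup_k\Vert n_k\theta\Vert$. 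What saves the argument --- and what is missing from your plan --- is that $t_0=n_0=1$ belongs to the sequence, so $\sup_k\Vert t_k\theta\Vert\le\varepsilon$ already forces $\theta=\Vert t_0\theta\Vert\le\varepsilon$, whence $(t_k-n_k)\theta\le\varepsilon$ and the two conditions become equivalent up to constants (this is the content of the paper's final Proposition, combined with Lemma \ref{lemme2jamison} to pass between $(n_k)$ and $\big(1,(n_k+1)\big)$). If you let $\tau$ float, you must reprove this with $\tau$ in place of $1$, which only complicates matters. Finally, the boundedness transfer must use only forward compositions $T_t=T_sT_{t-s}$ with $t\ge s$ together with $\sup_{0\le s\le1}\Vert T_s\Vert<+\infty$; the suggested ``$\Vert T_{t_{k_j}-m_j\tau}\Vert^{-1}$-type estimates'' are meaningless for a non-invertible semigroup.
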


In order to be able to use the characterization of Jamison sequences of \cite[Theorem 2.1]{BadeaGrivaux2}, we only consider at the beginning \textit{sequences of positive integers} which are Jamison sequences for $C_0$-semigroups. We show first that there is a relationship between the notion of Jamison sequence and that of Jamison sequence for $C_0$-semigroups when we restrict ourselves to sequences of integers (Theorem \ref{jamisoncontinu}). Once this result is proved, we will be able to prove Theorem \ref{caracterisationjamisonsemigroupes}.

\subsection{Sequences of integers which are Jamison sequences for $C_0$-semigroups}
Let $(n_k)_{k\ge 0}$ be an increasing sequence of positive integers such that $n_0=1$. As in the proof of \cite[Theorem 2.8]{BadeaGrivaux2}, we associate to this sequence a distance $d_{(n_k)}$ on the unit circle $\T$ by setting
\begin{equation*}
d_{(n_k)}(\lambda,\mu)=\sup_{k\ge 0}\big\vert \lambda^{n_k}-\mu^{n_k}\big\vert\qquad \mathrm{for\ any\ }\lambda, \mu \in \T.
\end{equation*}

We now prove the theorem below.

\begin{Theo}\label{jamisoncontinu}
Let $(n_k)_{k\ge 0}$ be an increasing sequence of positive integers such that $n_0=1$. The following assertions are equivalent$:$\\
\noindent $(1)$ the sequence $(n_k)_{k\ge 0}$ is a Jamison sequence for $C_0$-semigroups$;$\\
\noindent $(2)$ the sequence $(n_k)_{k\ge 0}$ is a Jamison sequence$;$\\
\noindent $(3)$ there exists $\varepsilon>0$ such that for any two distinct points $\lambda, \mu\in \T$,
\begin{equation*}
d_{(n_k)}(\lambda,\mu)\ge \varepsilon.
\end{equation*}
\end{Theo}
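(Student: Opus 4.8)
The equivalence of (2) and (3) is exactly Theorem \ref{jamisonth}, so the real content is the equivalence between (1) and (2), i.e. that for a sequence of positive integers, being a Jamison sequence for $C_0$-semigroups is the same as being a Jamison sequence in the discrete sense. The plan is to prove the two implications $(2)\Rightarrow(1)$ and $\neg(2)\Rightarrow\neg(1)$ separately.

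\medskip

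\noindent $\rhd$ \textbf{$(2)\Rightarrow(1)$.} Suppose $(n_k)_{k\ge 0}$ is a Jamison sequence, and let $(T_t)_{t\ge 0}$ be a $C_0$-semigroup on a separable complex Banach space $X$, with generator $A$, such that $M:=\sup_{k\ge 0}\Vert T_{n_k}\Vert<+\infty$. I want to show $\sigma_p(A)\cap i\R$ is at most countable. The idea is to reduce to the discrete operator $S:=T_1$. Since the $n_k$ are integers, $T_{n_k}=(T_1)^{n_k}=S^{n_k}$, so $\sup_{k\ge 0}\Vert S^{n_k}\Vert=M<+\infty$; thus $S$ is partially power-bounded with respect to $(n_k)_{k\ge 0}$, and by the Jamison property $\sigma_p(S)\cap\T=\sigma_p(T_1)\cap\T$ is at most countable. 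Now invoke the spectral mapping relation for the point spectrum recalled in the excerpt, namely $\sigma_p(T_1)\cap\T=\exp(\sigma_p(A)\cap i\R)$. If $\sigma_p(A)\cap i\R$ were uncountable, then since $\exp$ maps $i\R$ onto $\T$ with countable fibres (the preimage of any point is a coset of $2\pi i\Z$), its image $\exp(\sigma_p(A)\cap i\R)$ would also be uncountable — a contradiction. Hence $\sigma_p(A)\cap i\R$ is countable, and $(n_k)_{k\ge 0}$ is a Jamison sequence for $C_0$-semigroups.

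\medskip

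\noindent $\rhd$ \textbf{$\neg(2)\Rightarrow\neg(1)$.} Suppose $(n_k)_{k\ge 0}$ is not a Jamison sequence. Then by Theorem \ref{theoremejamisonuniversel} (applied, say, on a separable Hilbert space, which admits an unconditional Schauder decomposition) there is a separable Banach space $X$ and $T\in\mathcal B(X)$ with $\sup_{k\ge 0}\Vert T^{n_k}\Vert<+\infty$ and $\sigma_p(T)\cap\T$ uncountable. The plan is to manufacture from $T$ a $C_0$-semigroup exhibiting the failure of (1). Since $n_0=1$, the operator $T=T^{n_0}$ is power-bounded along the sequence in particular $\Vert T\Vert<+\infty$; moreover in the construction of Theorem \ref{theoremejamisonuniversel} the operator is of the form $T=D+B$ with $D$ a diagonal of unimodular eigenvalues and $B$ nuclear, so $T$ is in fact invertible (or one may discard the kernel). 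One then forms a $C_0$-semigroup on $X\times X$ or on an $\ell^2$-type direct sum so that $T$ appears as the time-one map, while keeping the partial-boundedness; alternatively — and this is the cleaner route — one uses that the spanning-eigenvector construction produces $T$ with so many unimodular eigenvectors that one can directly build a $C_0$-semigroup whose generator $A$ satisfies $T_1=T$. Concretely, if $\{u^{(n)}\}$ are the unimodular eigenvectors with eigenvalues $\lambda_n=e^{i\theta_n}$, choose a \emph{bounded} sequence of reals $(\beta_n)$ with $e^{i\beta_n}=\lambda_n$ and define $T_t$ on the closed span of the $u^{(n)}$ by $T_t u^{(n)}=e^{it\beta_n}u^{(n)}$; the boundedness of $(\beta_n)$ and the estimates of Theorem \ref{theoremejamisonuniversel} give $\sup_{k}\Vert T_{n_k}\Vert<+\infty$, the generator $A$ has $i\beta_n\in\sigma_p(A)\cap i\R$ for all $n$, and since the $\lambda_n$ are dense in an uncountable subset of $\T$ the set $\{i\beta_n\}$ is uncountable. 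Hence $(n_k)_{k\ge 0}$ is not a Jamison sequence for $C_0$-semigroups.

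\medskip

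The main obstacle is the second implication: one must turn the \emph{discrete} operator coming from Theorem \ref{theoremejamisonuniversel} into an honest $C_0$-semigroup while preserving both the partial boundedness $\sup_k\Vert T_{n_k}\Vert<+\infty$ and the uncountability of the unimodular point spectrum of the generator. The delicate point is the choice of the logarithms $\beta_n$ of the unimodular eigenvalues $\lambda_n$: they must be taken in a \emph{bounded} range (so that $(T_t)_{t\ge0}$ is well-defined and the norm estimates along $(n_k)$ transfer from the discrete to the continuous setting — this is where one re-runs the estimate of the sum \eqref{sommepetite} with $\lambda_\ell^{n_k}$ replaced by $e^{in_k\beta_\ell}$), yet chosen so that the corresponding imaginary parts $i\beta_n$ remain uncountable in $i\R$, which is automatic once the $\lambda_n$ accumulate on an uncountable set and the $\beta_n$ are a genuine (injective enough) choice of argument. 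Once $(2)\Leftrightarrow(3)$ is granted from Theorem \ref{jamisonth} and $(1)\Leftrightarrow(2)$ is established as above, the theorem follows.
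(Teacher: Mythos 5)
Your implication $(2)\Rightarrow(1)$ is correct and is a legitimate shortcut compared with the paper: the paper reruns the Jamison separation argument directly on eigenvectors of the generator (splitting $i\R$ into the windows $[2i\ell\pi,2i(\ell+1)\pi[$ and separating normalized eigenvectors by $\varepsilon/(M+1)$), whereas you reduce to the time-one map $T_1$ and use $\exp(\sigma_p(A)\cap i\R)\subseteq\sigma_p(T_1)\cap\T$ together with the countability of the fibres of $\exp$ on $i\R$. Both work, and yours only needs the easy inclusion $Ax=i\eta x\Rightarrow T_1x=e^{i\eta}x$.

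The converse direction is where the genuine gaps are. The paper does not pass through Theorem \ref{theoremejamisonuniversel} at all: it builds the translation semigroup on a weighted $L^2$ space of functions on $[0,+\infty[$, renorms it using the products $\prod_{\ell}(S_{n_{k_\ell}}-I)$ to force $\sup_k\Vert S_{n_k}\Vert_*\le 5$, and restricts to the closed span of the exponentials $e_\eta$, $\eta\in I_K$, where $K\subset\T$ is uncountable with $(K,d_{(n_k)})$ separable; separability of that subspace comes from the continuity of the eigenvector field. Your alternative route --- exponentiate the discrete counterexample --- is viable in principle (it is essentially how the paper later proves Theorem \ref{espacejamisonuniverselsemigroupes}), but as written it fails at two points. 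First, setting $T_tu^{(n)}=e^{it\beta_n}u^{(n)}$ on the eigenvectors does not by itself produce a bounded operator for non-integer $t$: the $u^{(n)}$ are not an unconditional basis (they cluster, with $\Vert u^{(n)}-u^{(j(n))}\Vert\le 2^{-n}$), and the boundedness, indeed unimodularity, of the multipliers $e^{it\beta_n}$ says nothing about the norm of the resulting diagonal map with respect to such a family; the correct fix is to place all $\lambda_n$ on a small arc near $1$ so that $\sigma(T)\subset\big\{z\,;\,\mathfrak{Re}\,z>\frac12\big\}$ and define $T_t=e^{t\,\mathrm{Log}\,T}$ by the holomorphic functional calculus, which also yields the strong (in fact norm) continuity you never address. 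Second, your uncountability claim is false as stated: the set $\{i\beta_n\,;\,n\ge 1\}$ is countable, one $\beta_n$ per $n$. The uncountably many unimodular eigenvalues of $T$ are not the $\lambda_n$ themselves but arise from their accumulation via the perfectly spanning property, so uncountability of $\sigma_p(A)\cap i\R$ must be transferred through $\sigma_p(T_1)\cap\T=\exp(\sigma_p(A)\cap i\R)$ (or through the observation that $Tx=\lambda x$ implies $(\mathrm{Log}\,T)x=(\mathrm{Log}\,\lambda)x$), not by counting the $\beta_n$.
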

\begin{proof}
According to \cite[Theorem 2.1]{BadeaGrivaux2}, assertions $(2)$ and $(3)$ are equivalent. So let us prove $(3) \Rightarrow (1)$. Let $X$ be a separable complex Banach space and $(T_t)_{t\ge 0}$ a $C_0$-semigroup of bounded linear operators on $X$ such that $M:=\sup_{k\ge 0}\vert\vert T_{n_k}\vert\vert<+\infty$. We denote by $A$ the infinitesimal generator of $(T_t)_{t\ge 0}$. Let $i\eta$ and $i\xi$ be two eigenvalues of $A$ ($\eta,\xi\in\R$) and let $x_\eta$ and $x_\xi$ be eigenvectors such that $\vert\vert x_\eta\vert\vert=\vert\vert x_\xi\vert\vert=1$ and $Ax_\phi =i\phi x_\phi$ for $\phi\in\{\eta,\xi\}$. We know that for any nonnegative integer $k$ and for $\phi\in\{\eta,\xi\}$, we have $T_{n_k}x_\phi =e^{i\phi n_k}x_\phi$. By the triangle inequality,
\begin{equation}\label{inegalitestriangulaires}
\big\vert e^{i\eta n_k}-e^{i\xi n_k}\big\vert-\vert\vert x_\eta-x_\xi \vert\vert\le \vert\vert T_{n_k}(x_\eta-x_\xi)\vert\vert\le M\vert\vert x_\eta-x_\xi \vert\vert.
\end{equation}
Setting $\lambda_\theta=e^{i\theta}$ for $\theta\in \R$, we deduce from \eqref{inegalitestriangulaires} that
\begin{equation*}
\vert\vert x_\eta-x_\xi \vert\vert\ge \frac{\sup_{k\ge 0}\big\vert \lambda_{\eta-\xi}^{n_k}-1 \big\vert}{M+1}\cdot
\end{equation*}
Under the assumption that $\eta,\xi \in [2\ell\pi,2(\ell+1)\pi[$ for some integer $\ell$ and that $\eta\ne \xi$, $\lambda_{\eta-\xi}\in\mathbb{T}\setminus\{1\}$ and $\sup_{k\ge 0}\big\vert \lambda_{\eta-\xi}^{n_k}-1 \big\vert\ge \varepsilon$. It follows that 
\begin{equation*}
\vert\vert x_\eta-x_\xi \vert\vert\ge \frac{\varepsilon}{M+1}\cdot
\end{equation*}
Since the space $X$ is separable, we thus obtain that the set $\sigma_p(A)\cap [2i\ell\pi,2i(\ell+1)\pi[$ is at most countable. We then conclude that the set $\sigma_p(A)\cap i\mathbb{R}$ itself is at most countable, which proves that $(t_k)_{k\ge 0}$ is a Jamison sequence for $C_0$-semigroups.\\
\noindent We now prove $(1) \Rightarrow (3)$. Assume that property $(3)$ is not satisfied. We know from \cite[Theorem 2.8]{BadeaGrivaux2} that there exists an uncountable subset $K$ of $\T$ such that the metric space $(K,d_{(n_k)})$ is separable and we want to prove that $(n_k)_{k\ge 0}$ is not a Jamison sequence for $C_0$-semigroups. In order to do this, we construct a separable Banach space $X$ and a $C_0$-semigroup $(S_t)_{t\ge 0}$ on $X$ (with infinitesimal generator $A$) which is bounded with respect to the sequence $(n_k)_{k\ge 0}$ and such that the set $\sigma_p(A)\cap i\R$ is uncountable. Let
\begin{equation*}
X=\Bigg\{ f:[0,+\infty[\longrightarrow \R\ \mathrm{measurable}\,;\, \vert\vert f \vert\vert := \bigg(\int_0^{+\infty}\frac{\vert f(t)\vert^2}{1+t^2}\,dt\bigg)^{1/2}<+\infty \Bigg\}
\end{equation*}
and let $(S_t)_{t\ge 0}$ be the translation semigroup on $X$ defined by 
\begin{equation*}
S_t f(x)=f(x+t)\ \ \ \ \ \ \ \ (f\in X,\, t,x\ge 0).
\end{equation*}
We introduce a new space associated to the semigroup $(S_t)_{t\ge 0}$ by setting $X_{*}:=\big\{f\in X\,;\,\vert\vert f \vert\vert_{*}<+\infty \big\}$ where the norm $\vert\vert\cdot\vert\vert_{*}$ is defined by
\begin{equation*}
\vert\vert f \vert\vert_{*}:=\max\Bigg( \vert\vert f \vert\vert,\, \sup_{j\ge 0}\,4^{-(j+1)}\sup_{k_0,\dots, k_j\ge 0}\Bigg\vert\Bigg\vert \prod_{\ell=0}^j(S_{n_{k_{\ell}}} -I)f\Bigg\vert\Bigg\vert \Bigg).
\end{equation*}
It is rather easy to check that $S_t$ is a bounded linear operator on $X_{*}$. In a first step, we prove that the semigroup $(S_t)_{t\ge 0}$ is bounded with respect to the sequence $(n_k)_{k\ge 0}$. Then we construct a separable subspace of $X_*$ such that the semigroup $(S_t)_{t\ge 0}$ (defined on this space) is strongly continuous and such that the set  $\sigma_p(A)\cap i\mathbb{R}$ is uncountable, where $A$ denotes the infinitesimal generator of $(S_t)_{t\ge 0}$.

\vspace*{0.5cm}

\noindent $\rhd$ \textbf{Boundedness of the semigroup $(S_t)_{t\ge 0}$ with respect to $(n_k)_{k\ge 0}$.}\\
\noindent Let $f\in X_*$ and $k\in \Z_+$. The norm $\vert\vert S_{n_k}f\vert\vert_{*}$ is equal to the maximum between the quantities
\begin{equation*}
\vert\vert S_{n_k}f\vert\vert\ \ \ \ \ \mathrm{and}\ \ \ \ \ \sup_{j\ge 0}\, 4^{-(j+1)}\sup_{k_0,\dots, k_j\ge 0}\Bigg\vert\Bigg\vert \prod_{\ell=0}^j (S_{n_{k_\ell}}-I)S_{n_k}f \Bigg\vert\Bigg\vert.
\end{equation*}
On the one hand,
\begin{align*}
\vert\vert S_{n_k}f\vert\vert=\vert\vert f+(S_{n_k}-I)f \vert\vert &\le \vert\vert f \vert\vert+\vert\vert (S_{n_k}-I)f\vert\vert\\
&\le \vert\vert f \vert\vert_{*}+4\cdot\frac{1}{4}\vert\vert (S_{n_k}-I)f \vert\vert\\
&\le 5\vert\vert f \vert\vert_{*}
\end{align*}
and on the other hand, for any $j\in \Z_+$ and any $(j+1)$-tuple $(k_0,\dots, k_j)$ of nonnegative integers, we have
\begin{align*}
4^{-(j+1)}\Bigg\vert\Bigg\vert \prod_{\ell=0}^j(S_{n_{k_{\ell}}} -I)S_{n_k}f\Bigg\vert\Bigg\vert\le 4&\cdot 4^{-(j+2)}\Bigg\vert\Bigg\vert \prod_{\ell=0}^j (S_{n_{k_\ell}}-I)(S_{n_k}-I)f \Bigg\vert\Bigg\vert\\
&+4^{-(j+1)}\Bigg\vert\Bigg\vert \prod_{\ell=0}^j (S_{n_{k_\ell}}-I)f \Bigg\vert\Bigg\vert.
\end{align*}
This proves that
\begin{equation*}
\vert\vert S_{n_k}f\vert\vert_{*}\le 4\vert\vert f \vert\vert_{*}+\vert\vert f \vert\vert_{*}=5\vert\vert f \vert\vert_{*}
\end{equation*}
and so $\sup_{k\ge 0}\vert\vert S_{n_k}\vert\vert_{*}\le 5$.

\vspace*{0.5cm}

\noindent $\rhd$ \textbf{Eigenvectors of the infinitesimal generator $A$.}\\
\noindent For any $\eta\in [0,2\pi[$, we put $e_\eta(x)=e^{i\eta x}$ $(x\in \R)$. It is clear that $e_\eta$ is an eigenvector of $A$ associated to the eigenvalue $i\eta$ and that $e_\eta$ belongs to the space $X_*$. Indeed, $\vert\vert e_\eta \vert\vert =\sqrt{\frac{\pi}{2}}$ and for any nonnegative integer $j$ and any $(j+1)$-tuple $(k_0,\dots, k_j)$ of nonnegative integers, we have
\begin{equation*}
\Bigg\vert\Bigg\vert \prod_{\ell=0}^j (S_{n_{k_\ell}}-I)e_\eta \Bigg\vert\Bigg\vert=\Bigg(\prod_{\ell=0}^j \big\vert e^{i\eta n_{k_\ell}}-1 \big\vert\Bigg)\sqrt{\frac{\pi}{2}}\le 2^{j+1}\sqrt{\frac{\pi}{2}},
\end{equation*}
which proves that $\vert\vert e_\eta \vert\vert_{*}=\sqrt{\frac{\pi}{2}}$.

\vspace*{0.5cm}

\noindent $\rhd$ \textbf{Making the space $X_*$ separable.}\\
\noindent At this stage of the proof, we use our assumption: we know that there exists an uncountable subset $K$ of $\T$ such that the metric space $(K,d_{(n_k)})$ is separable. We also define the set 
\begin{equation*}
I_K:=\big\{\eta\in [0,2\pi[\,;\, e^{i\eta}\in K\big\}.
\end{equation*}
The subspace of $X_*$ we are going to consider in the sequel of the proof is
\begin{equation*}
X_{*}^K:=\overline{\mathrm{span}}^{\vert\vert\cdot \vert\vert_{*}}\big[ e_\eta\,;\, \eta\in I_K \big]
\end{equation*}
equipped with the norm $\vert\vert\cdot\vert\vert_*$. Since the set $\sigma_p(A)\cap i\mathbb{R}$ contains $I_K$, it is uncountable. Furthermore, the semigroup $(S_t)_{t\ge 0}$ is still bounded with respect to the sequence $(n_k)_{k\ge 0}$ and it is easy to prove that it is also strongly continuous by using a density argument. The only thing we really need to prove is that the space $X_*^K$ is separable. This is a consequence of the following lemma.

\begin{Lem}
The eigenvector field $E : I_K\longrightarrow X_{*}$ defined by $E(\eta)=e_\eta$ is continuous on $I_K$.
\end{Lem}
\begin{proof}
Let $\eta,\mu \in I_K$. We need to estimate the quantity $\vert\vert e_\eta-e_\xi\vert\vert_{*}$, which is equal to
\begin{equation*}
\max\Bigg( \vert\vert e_\eta-e_\xi\vert\vert,\, \sup_{j\ge 0}\, 4^{-(j+1)}\sup_{k_0,\dots, k_j\ge 0}\Bigg\vert\Bigg\vert \prod_{\ell=0}^j \big(e^{i\eta n_{k_\ell}}-1\big)e_\eta-\prod_{\ell=0}^j\big(e^{i\xi n_{k_\ell}}-1\big)e_\xi \Bigg\vert\Bigg\vert \Bigg).
\end{equation*}
For any nonnegative integer $j$ and for any $(j+1)$-tuple $(k_0,\dots, k_j)\in (\mathbb{Z}_+)^{j+1}$, we have
\begin{align*}
\Bigg\vert\Bigg\vert \prod_{\ell=0}^j \big(e^{i\eta n_{k_\ell}}-1\big)e_\eta-\prod_{\ell=0}^j\big(e^{i\xi n_{k_\ell}}-1\big)e_\xi \Bigg\vert\Bigg\vert &\le \vert\vert  e_\eta-e_\xi\vert\vert \prod_{\ell=0}^j \big\vert e^{i\eta n_{k_\ell}}-1\big\vert\\
&+\vert\vert e_\xi\vert\vert \Bigg\vert \prod_{\ell=0}^j \big(e^{i\eta n_{k_\ell}}-1\big)-\prod_{\ell=0}^j\big(e^{i\xi n_{k_\ell}}-1\big) \Bigg\vert
\end{align*}
and then
\begin{align}\label{estimationequation2}
\notag \Bigg\vert\Bigg\vert \prod_{\ell=0}^j \big(e^{i\eta n_{k_\ell}}-1\big)e_\eta-\prod_{\ell=0}^j\big(e^{i\xi n_{k_\ell}}-1 &\big)e_\xi \Bigg\vert\Bigg\vert \le  2^{j+1}\vert\vert 
 e_\eta-e_\xi\vert\vert\\
&+\sqrt{\frac{\pi}{2}}\Bigg\vert \prod_{\ell=0}^j \big(e^{i\eta n_{k_\ell}}-1\big)-\prod_{\ell=0}^j\big(e^{i\xi n_{k_\ell}}-1\big) \Bigg\vert.
\end{align}
We need to estimate the quantity which appears in \eqref{estimationequation2}. For this, we write for $j\ge 0$,
\begin{equation*}
d_j\big(e^{i\eta},e^{i\xi}\big)=\sup_{k_0,\dots, k_j\ge 0}\Bigg\vert \prod_{\ell=0}^j \big(e^{i\eta n_{k_\ell}}-1)-\prod_{\ell=0}^j\big(  e^{i\xi n_{k_\ell}}-1\big) \Bigg\vert.
\end{equation*}
According to the identity
\begin{align*}
\prod_{\ell=0}^j \big(e^{i\eta n_{k_\ell}}-1\big)-\prod_{\ell=0}^j\big(e^{i\xi n_{k_\ell}}-1\big)=\big(& e^{i\eta n_{k_0}}-e^{i\xi t_{k_0}} \big)\prod_{\ell=1}^j\big( e^{i\eta n_{k_\ell}}-1 \big)\\
&+\big(e^{i\xi n_{k_0}}-1\big)\Bigg(\prod_{\ell=1}^j\big(e^{i\eta n_{k_\ell}}-1\big)-\prod_{\ell=1}^j\big(e^{i\xi n_{k_\ell}}-1\big) \Bigg),
\end{align*}
we get the estimate 
\begin{equation*}
d_j\big(e^{i\eta},e^{i\xi}\big)\le 2^j d_{(n_k)}\big(e^{i\eta},e^{i\xi}\big)+2d_{j-1}\big(e^{i\eta},e^{i\xi}\big).
\end{equation*}
Then it follows from an easy induction argument that 
\begin{equation*}
d_j\big(e^{i\eta}, e^{i\xi}\big)\le (j+1)2^j d_{(n_k)}\big(e^{i\eta},e^{i\xi}\big)
\end{equation*}
for any nonnegative integer $j$. The above estimates give us
\begin{equation*}
\vert\vert e_\eta-e_\xi\vert\vert_{*}\le\max\bigg( \vert\vert e_\eta-e_\xi\vert\vert, \sup_{j\ge 0}\Big( 2^{-(j+1)}\vert\vert e_\eta-e_\xi\vert\vert + \sqrt{\frac{\pi}{2}}(j+1)2^{-(j+2)}d_{(n_k)}\big(e^{i\eta},e^{i\xi}\big) \Big) \bigg).
\end{equation*}
Then there exists a constant $C>0$ such that for any $\eta,\mu\in I_K$,
\begin{equation*}
\vert\vert e_\eta-e_\xi\vert\vert_{*}\le C\big( \vert\vert e_\eta-e_\xi\vert\vert+ d_{(n_k)}\big(e^{i\eta},e^{i\xi}\big) \big).
\end{equation*}
We now fix $\varepsilon>0$. There exists $A_\varepsilon>0$ such that
\begin{equation*}
\int_{A_\epsilon}^{+\infty}\frac{\big\vert e_\eta(t)-e_\xi(t) \big\vert^2}{1+t^2}\,dt\le \int_{A_\epsilon}^{+\infty}\frac{4}{1+t^2}\,dt\le \varepsilon^2
\end{equation*}
and then
\begin{equation*}
\vert\vert e_\eta-e_\xi\vert\vert_{*}\le C\Bigg(\Bigg(\int_0^{A_\epsilon}\frac{\big\vert e^{i\eta t}-e^{i\xi t}\big\vert^2}{1+t^2}\,dt\Bigg)^{1/2}+d_{(n_k)}\big(e^{i\eta},e^{i\xi}\big)+\varepsilon\Bigg).
\end{equation*}
It follows that the eigenvector field $E:\eta\longmapsto e_\eta$ is continuous on $I_K$.
\end{proof}
As a consequence, the space $X_*^K$ is separable, which proves the implication $(1) \Rightarrow (3)$.
\end{proof}

With the help of Theorem \ref{jamisoncontinu}, we will be able to prove our result on the characterization of Jamison sequences for $C_0$-semigroups (Theorem \ref{caracterisationjamisonsemigroupes}).

\subsection{Relationship with real Jamison sequences for $C_0$-semigroups}
Let $(t_k)_{k\ge 0}$ be an increasing sequence of positive real numbers such that $t_0=1$ and $t_k\longrightarrow +\infty$. For any nonnegative integer $k$, let us denote by $n_k$ the integer part of $t_k$. In particular, $n_0=1$. To begin with, we have the following easy fact.

\begin{Fac}\label{faitjamison}
If $(T_t)_{t\ge 0}$ is a $C_0$-semigroup of bounded linear operators on a Banach space $X$ such that $\sup_{k\ge 0}\vert\vert T_{n_k}\vert\vert<+\infty$, then $\sup_{k\ge 0}\vert\vert T_{t_k}\vert\vert<+\infty$.
\end{Fac}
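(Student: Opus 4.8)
The plan is to prove Fact \ref{faitjamison} by comparing the operators $T_{t_k}$ and $T_{n_k}$ directly, using the semigroup law to factor $T_{t_k}=T_{t_k-n_k}T_{n_k}$ and then controlling the "short" factor $T_{t_k-n_k}$ uniformly. Since $n_k=\lfloor t_k\rfloor$, the exponent $s_k:=t_k-n_k$ lies in $[0,1)$ for every $k$, so we are reduced to bounding $\sup_{0\le s<1}\Vert T_s\Vert$.

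First I would recall the standard fact from $C_0$-semigroup theory (see \cite[Chapter 1]{EngelNagel}) that for any $C_0$-semigroup $(T_t)_{t\ge 0}$ there exist constants $M_0\ge 1$ and $\omega\in\R$ such that $\Vert T_t\Vert\le M_0 e^{\omega t}$ for all $t\ge 0$; in particular, the function $t\mapsto\Vert T_t\Vert$ is bounded on every compact subinterval of $[0,+\infty[$, so $M_1:=\sup_{0\le s\le 1}\Vert T_s\Vert<+\infty$. Then I would write, for each $k\ge 0$,
\begin{equation*}
\Vert T_{t_k}\Vert=\Vert T_{s_k}T_{n_k}\Vert\le\Vert T_{s_k}\Vert\,\Vert T_{n_k}\Vert\le M_1\,\Vert T_{n_k}\Vert\le M_1\sup_{j\ge 0}\Vert T_{n_j}\Vert,
\end{equation*}
where $s_k=t_k-n_k\in[0,1]$. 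Taking the supremum over $k$ gives $\sup_{k\ge 0}\Vert T_{t_k}\Vert\le M_1\sup_{k\ge 0}\Vert T_{n_k}\Vert<+\infty$ by hypothesis, which is exactly the claim.

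There is essentially no obstacle here; the only point requiring a word of care is the uniform bound $M_1<+\infty$ on $[0,1]$, which is not completely trivial for a merely strongly continuous semigroup (it follows from the uniform boundedness principle together with strong continuity, or directly from the exponential estimate above). I would simply cite \cite[Chapter 1]{EngelNagel} for this and keep the argument to a couple of lines. Note also that the fact is used only in one direction — boundedness with respect to the integer sequence $(n_k)$ implies boundedness with respect to the real sequence $(t_k)$ — which is the direction needed to transfer the positive (Jamison) conclusion of Theorem \ref{jamisoncontinu} from integers to reals.
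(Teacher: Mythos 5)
Your proof is correct and follows essentially the same route as the paper: factor $T_{t_k}=T_{s_k}T_{n_k}$ with $s_k=t_k-n_k\in[0,1[$ and use the uniform bound $\sup_{0\le s\le 1}\Vert T_s\Vert<+\infty$, which the paper also invokes (without further justification). Your extra remark citing the standard exponential estimate $\Vert T_t\Vert\le M_0e^{\omega t}$ is a harmless elaboration of a step the paper takes for granted.
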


\begin{proof}
The proof is a consequence of the fact that the family $\big\{T_s\,;\, s\in [0,1]\big\}$ is bounded. Indeed, for any nonnegative integer $k$, $\varepsilon_k=t_k-n_k\in [0,1[$ and so
\begin{equation*}
\vert\vert T_{t_k}\vert\vert=\vert\vert T_{n_k}T_{\varepsilon_k}\vert\vert\le \vert\vert T_{n_k}\vert\vert\,\vert\vert T_{\varepsilon_k}\vert\vert\le \bigg(\sup_{0\le s\le 1}\vert\vert T_s\vert\vert\bigg)\vert\vert T_{n_k}\vert\vert
\end{equation*}
and the conclusion follows from the assumption.
\end{proof}

The characterization of Jamison sequences for $C_0$-semigroups is a consequence of the two following lemmas.

\begin{Lem}\label{lemme1jamison}
\noindent $(1)$ If $(t_k)_{k\ge 0}$ is a Jamison sequence for $C_0$-semigroups, then $(n_k)_{k\ge 0}$ is a Jamison sequence for $C_0$-semigroups as well.\\
\noindent $(2)$ If $\big( 1,(n_k+1)_{k\ge 0} \big)$ is a Jamison sequence for $C_0$-semigroups, then $(t_k)_{k\ge 0}$ is a Jamison sequence for $C_0$-semigroups as well.
\end{Lem}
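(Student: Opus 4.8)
The plan is to derive both implications from Fact \ref{faitjamison} together with the local boundedness of $C_0$-semigroups, namely the standard fact that $\sup_{0\le s\le 1}\|T_s\|<+\infty$ for any $C_0$-semigroup $(T_t)_{t\ge 0}$ on a Banach space. In each case I would pass to the increasing rearrangement of the integers involved, which is harmless since partial boundedness of a semigroup with respect to a sequence depends only on the set of its terms.

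For assertion $(1)$, I would start with a $C_0$-semigroup $(T_t)_{t\ge 0}$ on a separable complex Banach space $X$, with infinitesimal generator $A$, such that $\sup_{k\ge 0}\|T_{n_k}\|<+\infty$. Fact \ref{faitjamison} then yields $\sup_{k\ge 0}\|T_{t_k}\|<+\infty$, so $(T_t)_{t\ge 0}$ is partially bounded with respect to $(t_k)_{k\ge 0}$; since the latter is a Jamison sequence for $C_0$-semigroups, $\sigma_p(A)\cap i\R$ is at most countable. As $X$ and $(T_t)_{t\ge 0}$ are arbitrary, this shows that $(n_k)_{k\ge 0}$ — more precisely its increasing rearrangement, which has first term $n_0=1$ and tends to $+\infty$ because $t_k\to+\infty$ — is a Jamison sequence for $C_0$-semigroups.

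For assertion $(2)$, I would take a $C_0$-semigroup $(T_t)_{t\ge 0}$ on a separable complex Banach space $X$, with generator $A$, such that $M:=\sup_{k\ge 0}\|T_{t_k}\|<+\infty$. Writing $t_k=n_k+\varepsilon_k$ with $\varepsilon_k\in[0,1[$, one has $n_k+1=t_k+(1-\varepsilon_k)$ with $1-\varepsilon_k\in\,]0,1]$, hence $T_{n_k+1}=T_{1-\varepsilon_k}T_{t_k}$ and
\begin{equation*}
\|T_{n_k+1}\|\le\|T_{1-\varepsilon_k}\|\,\|T_{t_k}\|\le\Big(\sup_{0\le s\le 1}\|T_s\|\Big)M<+\infty
\end{equation*}
for every $k\ge 0$. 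Together with $\|T_1\|<+\infty$, this says that $(T_t)_{t\ge 0}$ is partially bounded with respect to $\big(1,(n_k+1)_{k\ge 0}\big)$, so by hypothesis $\sigma_p(A)\cap i\R$ is at most countable, which is the desired conclusion.

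I expect the only delicate point to be bookkeeping rather than mathematics: the families $(n_k)_{k\ge 0}$ and $(n_k+1)_{k\ge 0}$ need not be strictly increasing, so the statement should be read with these replaced by their increasing rearrangements; these remain infinite (because $t_k\to+\infty$) and start at $1$ (because $n_0=\lfloor t_0\rfloor=1$), so that Definition \ref{jamisonsemigroupes} genuinely applies. No real obstacle is anticipated here — all the substance of the characterization in Theorem \ref{caracterisationjamisonsemigroupes} is carried by Theorem \ref{jamisoncontinu}, and the present lemma is merely a comparison of a real sequence with the neighbouring integers.
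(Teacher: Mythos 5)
Your proposal is correct and follows essentially the same route as the paper: assertion $(1)$ is exactly Fact \ref{faitjamison} applied to a semigroup partially bounded along $(n_k)_{k\ge 0}$, and assertion $(2)$ is the same one-line semigroup-law estimate $T_{n_k+1}=T_{1-\varepsilon_k}T_{t_k}$ together with local boundedness of $(T_t)_{t\ge 0}$ on $[0,1]$, which is precisely what the paper means by ``the same proof as that of Fact \ref{faitjamison}.'' Your extra remark about passing to increasing rearrangements is harmless bookkeeping that the paper leaves implicit.
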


\begin{proof}
Assume that $(t_k)_{k\ge 0}$ is a Jamison sequence for $C_0$-semigroups. Let $(T_t)_{t\ge 0}$ be a $C_0$-semigroup (with infinitesimal generator $A$) of bounded linear operators on a separable complex Banach space $X$ such that $\sup_{k\ge 0}\vert\vert T_{n_k}\vert\vert<+\infty$. According to Fact \ref{faitjamison}, we know that $\sup_{k\ge 0}\vert\vert T_{t_k}\vert\vert<+\infty$ and it follows from the assumption that the set $\sigma_p(A)\cap i\R$ is at most countable. Then $(n_k)_{k\ge 0}$ is a Jamison sequence for $C_0$-semigroups and $(1)$ is proved. We prove assertion $(2)$ with the same method: assume that the sequence $\big( 1,(n_k+1)_{k\ge 0} \big)$ is a Jamison sequence for $C_0$-semigroups and let $(T_t)_{t\ge 0}$ be a $C_0$-semigroup (with infinitesimal generator $A$) of bounded linear operators on a separable complex Banach space $X$ such that $\sup_{k\ge 0}\vert\vert T_{t_k}\vert\vert<+\infty$. By the definition of $n_k$, the quantity $\varepsilon_k=n_k+1-t_k$ belongs to $]0,1]$ and the same proof as that of Fact \ref{faitjamison} shows that $\sup_{k\ge 0}\vert\vert T_{n_k+1}\vert\vert<+\infty$. Since $\big( 1,(n_k+1)_{k\ge 0} \big)$ is a Jamison sequence for $C_0$-semigroups, the set $\sigma_p(A)\cap i\R$ is at most countable and then the sequence $(t_k)_{k\ge 0}$ is a Jamison sequence for $C_0$-semigroups.
\end{proof}

In the second lemma, we establish a relationship between the sequences of integers $(n_k)_{k\ge 0}$ and $\big(1,(n_k+1)_{k\ge 0}\big)$ from the point of view of Jamison sequences.

\begin{Lem}\label{lemme2jamison}
The sequence $(n_k)_{k\ge 0}$ is a Jamison sequence if and only if $\big(1,(n_k+1)_{k\ge 0}\big)$ is a Jamison sequence.
\end{Lem}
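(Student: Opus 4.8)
The plan is to deduce both implications from the arithmetic characterization of Jamison sequences, Theorem~\ref{jamisonth}. Both $(n_k)_{k\ge 0}$ and $\big(1,(n_k+1)_{k\ge 0}\big)$ are increasing sequences of positive integers whose first term equals $1$ (recall $n_0=1$, so the second sequence is $1,2,n_1+1,n_2+1,\dots$ and is indeed strictly increasing), hence Theorem~\ref{jamisonth} applies to each. It tells us that $(n_k)_{k\ge 0}$ is a Jamison sequence if and only if there is $\varepsilon>0$ with $\sup_{k\ge 0}\vert\lambda^{n_k}-\mu^{n_k}\vert\ge\varepsilon$ for all distinct $\lambda,\mu\in\T$, and that $\big(1,(n_k+1)_{k\ge 0}\big)$ is a Jamison sequence if and only if there is $\varepsilon>0$ with
$$
\max\Big(\vert\lambda-\mu\vert,\ \sup_{k\ge 0}\vert\lambda^{n_k+1}-\mu^{n_k+1}\vert\Big)\ge\varepsilon
$$
for all distinct $\lambda,\mu\in\T$. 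So the task reduces to proving that these two separation properties of $\T$ are equivalent, allowing the constant $\varepsilon$ to change between them.

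The only computation needed is the elementary perturbation bound
$$
\Big\vert\ \vert\lambda^{n_k+1}-\mu^{n_k+1}\vert-\vert\lambda^{n_k}-\mu^{n_k}\vert\ \Big\vert\le\vert\lambda-\mu\vert\qquad(\lambda,\mu\in\T,\ k\ge 0),
$$
obtained from the identities $\lambda^{n_k+1}-\mu^{n_k+1}=\lambda(\lambda^{n_k}-\mu^{n_k})+(\lambda-\mu)\mu^{n_k}$ and, symmetrically, $\lambda^{n_k}-\mu^{n_k}=\overline{\lambda}(\lambda^{n_k+1}-\mu^{n_k+1})+(\overline{\lambda}-\overline{\mu})\mu^{n_k+1}$, together with $\vert\lambda\vert=\vert\mu\vert=1$ and $\vert\overline{\lambda}-\overline{\mu}\vert=\vert\lambda-\mu\vert$.

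Granting this bound, the equivalence follows by splitting into two regimes according to the size of $\vert\lambda-\mu\vert$. Suppose first that $(n_k)_{k\ge 0}$ is Jamison with constant $\varepsilon$, and fix distinct $\lambda,\mu\in\T$: if $\vert\lambda-\mu\vert\ge\varepsilon/2$ the displayed maximum is already $\ge\varepsilon/2$; otherwise, choosing $k$ with $\vert\lambda^{n_k}-\mu^{n_k}\vert$ as close as we wish to $\sup_j\vert\lambda^{n_j}-\mu^{n_j}\vert\ge\varepsilon$ and applying the bound gives $\sup_k\vert\lambda^{n_k+1}-\mu^{n_k+1}\vert\ge\varepsilon-\vert\lambda-\mu\vert>\varepsilon/2$, so $\big(1,(n_k+1)_{k\ge 0}\big)$ is Jamison with constant $\varepsilon/2$. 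Conversely, suppose $\big(1,(n_k+1)_{k\ge 0}\big)$ is Jamison with constant $\varepsilon$, and fix distinct $\lambda,\mu\in\T$; since $n_0=1$ one always has $\sup_k\vert\lambda^{n_k}-\mu^{n_k}\vert\ge\vert\lambda-\mu\vert$, which settles the case $\vert\lambda-\mu\vert\ge\varepsilon/2$, while if $\vert\lambda-\mu\vert<\varepsilon/2<\varepsilon$ the hypothesis forces $\sup_k\vert\lambda^{n_k+1}-\mu^{n_k+1}\vert\ge\varepsilon$, and the bound applied the other way yields $\sup_k\vert\lambda^{n_k}-\mu^{n_k}\vert\ge\varepsilon-\vert\lambda-\mu\vert>\varepsilon/2$, so $(n_k)_{k\ge 0}$ is Jamison with constant $\varepsilon/2$.

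I do not expect a genuine obstacle here; the only point requiring a little care is to isolate the regime in which $\vert\lambda-\mu\vert$ is already bounded below — where the extra entry $1$ (respectively the entry $n_0=1$) provides the required separation for free — from the regime in which $\vert\lambda-\mu\vert$ is small, where the perturbation of size $\vert\lambda-\mu\vert$ introduced by passing from $n_k$ to $n_k+1$ is negligible compared with $\varepsilon$.
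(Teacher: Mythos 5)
Your proof is correct. The forward implication is essentially the paper's: the same identity $\lambda^{n_k+1}-\mu^{n_k+1}=\lambda(\lambda^{n_k}-\mu^{n_k})+(\lambda-\mu)\mu^{n_k}$ and the same case split according to whether $\vert\lambda-\mu\vert$ exceeds $\varepsilon/2$ (the paper phrases it with $\mu=1$ only, which is equivalent by rotation invariance of $d_{(n_k)}$). Where you diverge is the converse. The paper does not touch the arithmetic characterization there at all: it simply observes that an operator which is partially power-bounded with respect to $(n_k)_{k\ge 0}$ is automatically partially power-bounded with respect to $\big(1,(n_k+1)_{k\ge 0}\big)$, since $\Vert T^{n_k+1}\Vert\le\Vert T\Vert\,\Vert T^{n_k}\Vert$ and $\Vert T\Vert=\Vert T^{n_0}\Vert$ is controlled; countability of $\sigma_p(T)\cap\T$ then transfers directly from the definition of a Jamison sequence. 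You instead run the perturbation bound in the reverse direction via $\lambda^{n_k}-\mu^{n_k}=\overline{\lambda}(\lambda^{n_k+1}-\mu^{n_k+1})+(\overline{\lambda}-\overline{\mu})\mu^{n_k+1}$ and again split on the size of $\vert\lambda-\mu\vert$, using $n_0=1$ to handle the well-separated regime. Both are valid; your version has the merit of being symmetric and entirely arithmetic (a single two-sided estimate $\bigl\vert\,\vert\lambda^{n_k+1}-\mu^{n_k+1}\vert-\vert\lambda^{n_k}-\mu^{n_k}\vert\,\bigr\vert\le\vert\lambda-\mu\vert$ settles both directions), while the paper's converse is shorter and avoids invoking Theorem \ref{jamisonth} a second time. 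No gap in either half.
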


\begin{proof}
Assume that $(n_k)_{k\ge 0}$ is a Jamison sequence. According to \cite[Theorem 2.1]{BadeaGrivaux2}, there exists $\varepsilon>0$ such that for any $\lambda \in \T\setminus \{1\}$, we have $\sup_{k\ge 0}\big\vert \lambda^{n_k}-1 \big\vert\ge \varepsilon$. Let $\lambda\in \mathbb{T}\setminus\{1\}$ such that $\big\vert \lambda-1\big\vert\le \frac{\varepsilon}{2}$. For any nonnegative integer $k$, we get
\begin{equation*}
\big\vert \lambda^{n_k+1}-1 \big\vert=\big\vert \lambda\big( \lambda^{n_k}-1 \big)+\lambda-1 \big\vert\ge \big\vert\lambda^{n_k}-1\big\vert-\frac{\varepsilon}{2}
\end{equation*}
and then
\begin{equation*}
\sup_{k\ge 0}\big\vert \lambda^{n_k+1}-1 \big\vert\ge \frac{\varepsilon}{2}\cdot
\end{equation*}
It follows that
\begin{equation*}
\max\Bigg( \big\vert \lambda-1\big\vert, \sup_{k\ge 0}\big\vert \lambda^{n_k+1}-1 \big\vert \Bigg)\ge \frac{\varepsilon}{2}\cdot
\end{equation*}
According to \cite[Theorem 2.1]{BadeaGrivaux2}, the sequence $\big(1,(n_k+1)_{k\ge 0}\big)$ is a Jamison sequence. On the other hand, if $\big(1,(n_k+1)_{k\ge 0}\big)$ is a Jamison sequence then it is straightforward to check that an operator which is partially power-bounded with respect to the sequence $(n_k)_{k\ge 0}$ is also partially power-bounded with respect to the sequence $\big(1,(n_k+1)_{k\ge 0}\big)$. This proves that $(n_k)_{k\ge 0}$ is also a Jamison sequence.
\end{proof}

The above lemmas and Theorem \ref{jamisoncontinu} allow us to prove the following result.

\begin{Theo}\label{equivalencejamison}
Let $(t_k)_{k\ge 0}$ be an increasing sequence of positive real numbers such that $t_0=1$ and $t_k \longrightarrow +\infty$. For any nonnegative integer $k$, we denote by $n_k$ the integer part of $t_k$. The following assertions are equivalent$:$\\
\noindent $(1)$ the sequence $(t_k)_{k\ge 0}$ is a Jamison sequence for $C_0$-semigroups$;$\\
\noindent $(2)$ the sequence $(n_k)_{k\ge 0}$ is a Jamison sequence$;$\\
\noindent $(3)$ there exists $\varepsilon>0$ such that for any $\lambda\in \T\setminus\{1\}$, we have $\sup_{k\ge 0}\big\vert \lambda^{n_k}-1 \big\vert\ge \varepsilon$. 
\end{Theo}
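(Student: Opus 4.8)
The plan is to obtain Theorem \ref{equivalencejamison} as a purely formal consequence of the results already in place, by chaining together Lemmas \ref{lemme1jamison} and \ref{lemme2jamison} with Theorem \ref{jamisoncontinu} and the arithmetic characterization of Jamison sequences from \cite[Theorem 2.1]{BadeaGrivaux2}. First I would record the elementary observations that, since $t_0=1$, we have $n_0=1$, and that, $(t_k)_{k\ge 0}$ being increasing, the sequence $(n_k)_{k\ge 0}$ of integer parts is non-decreasing. After discarding repeated terms — which affects neither the Jamison property, nor partial (power-)boundedness, nor the separation condition in $(3)$ — we may treat $(n_k)_{k\ge 0}$ as an increasing sequence of positive integers with first term $1$, so that Theorem \ref{jamisoncontinu} and \cite[Theorem 2.1]{BadeaGrivaux2} apply to it. The equivalence $(2)\Leftrightarrow(3)$ is then exactly \cite[Theorem 2.1]{BadeaGrivaux2} applied to $(n_k)_{k\ge 0}$ (it is also the equivalence $(2)\Leftrightarrow(3)$ of Theorem \ref{jamisoncontinu}).

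For $(1)\Rightarrow(2)$, I would argue as follows: assuming $(t_k)_{k\ge 0}$ is a Jamison sequence for $C_0$-semigroups, Lemma \ref{lemme1jamison}$(1)$ gives that $(n_k)_{k\ge 0}$ is a Jamison sequence for $C_0$-semigroups, and then the implication $(1)\Rightarrow(2)$ of Theorem \ref{jamisoncontinu} yields that $(n_k)_{k\ge 0}$ is a Jamison sequence.

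For $(2)\Rightarrow(1)$: assuming $(n_k)_{k\ge 0}$ is a Jamison sequence, Lemma \ref{lemme2jamison} tells us that $\bigl(1,(n_k+1)_{k\ge 0}\bigr)$ is a Jamison sequence; this is again an increasing sequence of positive integers with first term $1$ (after removing duplicates), so the implication $(2)\Rightarrow(1)$ of Theorem \ref{jamisoncontinu} shows that $\bigl(1,(n_k+1)_{k\ge 0}\bigr)$ is a Jamison sequence for $C_0$-semigroups, and Lemma \ref{lemme1jamison}$(2)$ then gives that $(t_k)_{k\ge 0}$ is a Jamison sequence for $C_0$-semigroups. Together with $(2)\Leftrightarrow(3)$, this closes the proof.

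There is no genuine analytic obstacle here: the substantive work — the construction of the translation semigroup on the weighted $L^2$-space underlying the implication $(1)\Rightarrow(3)$ of Theorem \ref{jamisoncontinu}, and the separation transfer between $(n_k)_{k\ge 0}$ and $\bigl(1,(n_k+1)_{k\ge 0}\bigr)$ in Lemma \ref{lemme2jamison} — has already been done. The only points needing a moment's care are the passage from $(n_k)_{k\ge 0}$ and from $\bigl(1,(n_k+1)_{k\ge 0}\bigr)$ to strictly increasing sequences beginning with $1$, so that Theorem \ref{jamisoncontinu} and \cite[Theorem 2.1]{BadeaGrivaux2} are legitimately invoked, together with the routine check that this reindexing leaves each of the three properties in the statement unchanged.
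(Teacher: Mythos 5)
Your proposal is correct and follows exactly the same route as the paper: $(2)\Leftrightarrow(3)$ via \cite[Theorem 2.1]{BadeaGrivaux2}, $(1)\Rightarrow(2)$ via Lemma \ref{lemme1jamison}$(1)$ combined with Theorem \ref{jamisoncontinu}, and $(2)\Rightarrow(1)$ via Lemma \ref{lemme2jamison}, Theorem \ref{jamisoncontinu}, and Lemma \ref{lemme1jamison}$(2)$. Your added remark about passing to a strictly increasing sequence of integer parts before invoking Theorem \ref{jamisoncontinu} is a sensible point of rigor that the paper leaves implicit, but it does not change the argument.
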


\begin{proof}
We already know from \cite[Theorem 2.1]{BadeaGrivaux2} that assertions $(2)$ and $(3)$ are equivalent. We now prove $(1) \Rightarrow (2)$. Assume that $(t_k)_{k\ge 0}$ is a Jamison sequence for $C_0$-semigroups. Applying Lemma \ref{lemme1jamison} and Theorem \ref{jamisoncontinu}, we obtain that $(n_k)_{k\ge 0}$ is a Jamison sequence and so $(1) \Rightarrow (2)$. We then prove $(2) \Rightarrow (1)$. If $(n_k)_{k\ge 0}$ is a Jamison sequence, then we know from Lemma \ref{lemme2jamison} that $\big(1,(n_k+1)_{k\ge 0}\big)$ is a Jamison sequence. According to Theorem \ref{jamisoncontinu}, the sequence $\big(1,(n_k+1)_{k\ge 0}\big)$ is a Jamison sequence for $C_0$-semigroups. It remains to apply Lemma \ref{lemme1jamison} in order to obtain that $(t_k)_{k\ge 0}$ is a Jamison sequence for $C_0$-semigroups, and this proves $(2) \Rightarrow (1)$.
\end{proof}

Theorem \ref{caracterisationjamisonsemigroupes} is a consequence of Theorem \ref{equivalencejamison} and the next proposition.

\begin{Prop}
Let $(t_k)_{k\ge 0}$ be an increasing sequence of positive real numbers such that $t_0=1$ and $t_k \longrightarrow +\infty$. For any nonnegative integer $k$, let us denote by $n_k$ the integer part of $t_k$. The following assertions are equivalent$:$\\
\noindent $(i)$ there exists $\varepsilon>0$ such that for any $\lambda\in\T\setminus\{1\}$, we have $\sup_{k\ge 0}\big\vert \lambda^{n_k}-1\big\vert\ge \varepsilon;$\\ 
\noindent $(ii)$ there exists $\varepsilon'>0$ such that for any $\theta\in \big]0,\frac{1}{2}\big]$, we have $\sup_{k\ge 0}\vert\vert t_k\theta\vert\vert\ge \varepsilon'$.
\end{Prop}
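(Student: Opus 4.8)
The plan is to rephrase the multiplicative condition $(i)$ as an equivalent ``additive'' statement about the function $\mathrm{dist}(\cdot,\Z)$, and then to exploit the elementary fact that $t_k$ and its integer part $n_k$ differ by less than $1$, so that $\vert\vert t_k\theta\vert\vert$ and $\vert\vert n_k\theta\vert\vert$ differ by at most $\theta$. Since both $(i)$ and $(ii)$ are automatically satisfied for $\theta$ bounded away from $0$ (the index $k=0$ contributes the term $\vert\vert\theta\vert\vert=\theta$, because $t_0=n_0=1$), this perturbation will be harmless.

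First I would reformulate $(i)$. Writing $\lambda=e^{2i\pi\theta}$ with $\theta\in\,]0,1[$ and using the two-sided estimate $C_2\vert\vert\mu\vert\vert\le\vert e^{2i\pi\mu}-1\vert\le C_1\vert\vert\mu\vert\vert$ applied to $\mu=n_k\theta$, one checks that $(i)$ holds if and only if there is $\varepsilon>0$ such that $\sup_{k\ge 0}\vert\vert n_k\theta\vert\vert\ge\varepsilon$ for every $\theta\in\,]0,1[$: if $\sup_k\vert e^{2i\pi n_k\theta}-1\vert\ge\varepsilon$ then $\sup_k\vert\vert n_k\theta\vert\vert\ge\varepsilon/C_1$, and conversely $\sup_k\vert e^{2i\pi n_k\theta}-1\vert\ge C_2\sup_k\vert\vert n_k\theta\vert\vert$. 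Since $\vert\vert n_k\theta\vert\vert=\vert\vert n_k(1-\theta)\vert\vert$ for every $k$ (because $n_k\in\Z$), this is equivalent to the same statement restricted to $\theta\in\,]0,\frac{1}{2}]$; call this statement $(i')$. It then suffices to prove $(i')\Longleftrightarrow(ii)$.

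The key (and essentially only) estimate is the following. Set $\varepsilon_k:=t_k-n_k\in[0,1[$. For $\theta\in\,]0,\frac{1}{2}]$ we have $\varepsilon_k\theta\in[0,\frac{1}{2}[$, hence $\vert\vert\varepsilon_k\theta\vert\vert=\varepsilon_k\theta\le\theta$, and applying the triangle inequality for $\mathrm{dist}(\cdot,\Z)$ to $t_k\theta=n_k\theta+\varepsilon_k\theta$ gives $\big\vert\,\vert\vert t_k\theta\vert\vert-\vert\vert n_k\theta\vert\vert\,\big\vert\le\theta$ for every $k$, and therefore $\big\vert\sup_k\vert\vert t_k\theta\vert\vert-\sup_k\vert\vert n_k\theta\vert\vert\big\vert\le\theta$ for every $\theta\in\,]0,\frac{1}{2}]$. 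To prove $(i')\Rightarrow(ii)$, take $\varepsilon\in\,]0,\frac{1}{2}]$ as in $(i')$ and set $\varepsilon':=\varepsilon/2$: if $\theta\in\,]0,\varepsilon']$ then $\sup_k\vert\vert t_k\theta\vert\vert\ge\sup_k\vert\vert n_k\theta\vert\vert-\theta\ge\varepsilon-\varepsilon'=\varepsilon'$, whereas if $\theta\in\,]\varepsilon',\frac{1}{2}]$ the term $k=0$ already gives $\vert\vert t_0\theta\vert\vert=\theta>\varepsilon'$. The reverse implication $(ii)\Rightarrow(i')$ is proved in exactly the same way, interchanging $t_k$ and $n_k$ and using $n_0=1$.

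There is no genuine difficulty: everything reduces to the fact that $\vert t_k-n_k\vert<1$, so $(i)$ and $(ii)$ can differ only on a range of $\theta$ on which both are forced by the term $k=0$. The two points that deserve a moment's attention are the passage between $\lambda\in\T\setminus\{1\}$ and $\theta\in\,]0,\frac{1}{2}]$, which relies on the evenness and periodicity of $\mathrm{dist}(\cdot,\Z)$, and the remark that $\varepsilon_k\theta<\frac{1}{2}$ whenever $\theta\le\frac{1}{2}$, which is precisely what makes $\vert\vert\varepsilon_k\theta\vert\vert$ equal to $\varepsilon_k\theta$ and hence bounded by $\theta$.
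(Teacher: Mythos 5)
Your proof is correct and follows the route the paper intends: it starts from the two-sided comparison $C_2\vert\vert\theta\vert\vert\le\vert e^{2i\pi\theta}-1\vert\le C_1\vert\vert\theta\vert\vert$, which is exactly the tool the paper invokes before leaving ``the details to the reader.'' You supply precisely those details, in particular the one genuinely non-automatic point --- that replacing $n_k$ by $t_k$ perturbs $\vert\vert\cdot\vert\vert$ by at most $\theta$, which is harmless because the index $k=0$ (with $t_0=n_0=1$) already forces both conditions when $\theta$ is bounded away from $0$ --- so your write-up is a complete and correct version of the paper's sketched argument.
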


\begin{proof}
Setting $\lambda=e^{2i\pi\theta}$ for $\theta\in \big]0,\frac{1}{2}\big]$, we know that $C_2\vert\vert\theta\vert\vert\le \vert \lambda_\theta-1\vert\le C_1\vert\vert\theta\vert\vert$ where $C_1$ and $C_2$ are positive constants which not depend on $\theta$. By using these inequalities, it is very easy to check that assertions $(i)$ and $(ii)$ are equivalent. We leave the details to the reader.
\end{proof}

\subsection{Universal Jamison spaces for $C_0$-semigroups}
We can prove an analog of Theorem \ref{theoremejamisonuniversel} in the context of $C_0$-semigroups. To begin with, we define the notion of \textit{universal Jamison space for} $C_0$-\textit{semigroups}.

\begin{Def}\label{defuniversaljamisonsg}
Let $X$ be a separable infinite-dimensional complex Banach space. We say that $X$ is a \textit{universal Jamison space for} $C_0$-\textit{semigroups} if the following property holds true: for any increasing sequence of positive real numbers $(t_k)_{k\ge 0}$ which is not a Jamison sequence for $C_0$-semigroups, there exists a $C_0$-semigroup $(T_t)_{t\ge 0}$ of bounded linear operators on $X$ (with infinitesimal generator $A$) which is bounded with respect to the sequence $(t_k)_{k\ge 0}$ and such that the set $\sigma_p(A)\cap i\R$ is uncountable.
\end{Def}

The analog of Theorem \ref{theoremejamisonuniversel} in the context of $C_0$-semigroup is the following.

\begin{Theo}\label{espacejamisonuniverselsemigroupes}
Let $X$ be a separable complex Banach space which admits an unconditional Schauder decomposition. Then $X$ is a universal Jamison space for $C_0$-semigroups.
\end{Theo}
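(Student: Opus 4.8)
The plan is to adapt the construction of Theorem~\ref{theoremejamisonuniversel} to the setting of $C_0$-semigroups, working throughout with a \emph{bounded} infinitesimal generator (so that the resulting semigroup is uniformly continuous, hence automatically strongly continuous). First I would reduce to integer sequences. Let $(t_k)_{k\ge 0}$ be an increasing sequence of positive real numbers, with $t_0=1$ and $t_k\to+\infty$, which is not a Jamison sequence for $C_0$-semigroups, and for each $k$ let $n_k$ denote the integer part of $t_k$, so that $n_0=1$. By Theorem~\ref{equivalencejamison}, the sequence $(n_k)_{k\ge 0}$ is not a Jamison sequence. By Fact~\ref{faitjamison}, any $C_0$-semigroup on $X$ which is bounded with respect to $(n_k)_{k\ge 0}$ is also bounded with respect to $(t_k)_{k\ge 0}$; hence it suffices to construct a $C_0$-semigroup $(T_t)_{t\ge 0}$ of bounded linear operators on $X$, with infinitesimal generator $A$, such that $\sup_{k\ge 0}\vert\vert T_{n_k}\vert\vert<+\infty$ and such that the set $\sigma_p(A)\cap i\R$ is uncountable.

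To do this I would keep the entire framework of the proof of Theorem~\ref{theoremejamisonuniversel}: an unconditional Schauder decomposition $X=\bigoplus_{\ell\ge 1}X_\ell$ with all the $X_\ell$ infinite-dimensional, biorthogonal systems $\big((x_{i,\ell})_{i\ge 1},(x_{i,\ell}^{*})_{i\ge 1}\big)$ in each $X_\ell$ normalized as in \eqref{conditionsysteme}, the function $j$, and the nuclear weighted backward shift $B$ of \eqref{backwardshift}--\eqref{poids} attached to a sequence of distinct unimodular numbers $(\lambda_\ell)_{\ell\ge 1}$. The only change is that the operator $T=D+B$ is replaced by the bounded operator $A:=iD+B$, where
$$
D\Bigl(\sum_{\ell\ge 1}x_\ell\Bigr):=\sum_{\ell\ge 1}\theta_\ell x_\ell
$$
and $(\theta_\ell)_{\ell\ge 1}$ is a sequence of distinct real numbers chosen, by the same inductive scheme as in Theorem~\ref{theoremejamisonuniversel}, so that $\lambda_\ell:=e^{i\theta_\ell}$ is as close as we wish to $\lambda_{j(\ell)}$ for the distance $d_{(n_k)}$ --- this is legitimate precisely because $(n_k)_{k\ge 0}$ is not a Jamison sequence (Theorem~\ref{jamisonth}) --- while the $\theta_\ell$ themselves are taken with $\vert\theta_\ell-\theta_{j(\ell)}\vert$ summable, so that $\sup_{\ell\ge 1}\vert\theta_\ell\vert<+\infty$. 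Under this last condition $D$, and therefore $A$, is a bounded operator on $X$ by unconditionality of the decomposition, and $(T_t)_{t\ge 0}:=(e^{tA})_{t\ge 0}$ is a uniformly continuous, hence $C_0$, semigroup on $X$ with infinitesimal generator $A$.

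The analysis of the unimodular eigenvectors then proceeds exactly as in Theorem~\ref{theoremejamisonuniversel}. The subspace $\hat{X_1}=\overline{\mathrm{span}}[x_{1,\ell}\,;\,\ell\ge 1]$ is $A$-invariant, the equation $A_1x=i\theta x$ on $\hat{X_1}$ is solved by the same triangular recursion as before (with $\lambda_n-\lambda_\ell$ replaced by $i(\theta_n-\theta_\ell)$), and it yields for each $n$ a one-dimensional eigenspace $\mathrm{Ker}(A_1-i\theta_n)=\mathrm{span}[u_1^{(n)}]$; each $u_1^{(n)}$ is an eigenvector of $T_1|_{\hat{X_1}}$ for the unimodular eigenvalue $\lambda_n$, the numbers $\lambda_n$ are distinct, the vectors $u_1^{(n)}$ span a dense subspace of $\hat{X_1}$, and the argument of Fact~\ref{factspanning} carries over verbatim (the role of $\vert\lambda_n-\lambda_{j(n)}\vert$ being played by $\vert\theta_n-\theta_{j(n)}\vert$, which is small since $d_{(n_k)}\ge\vert\cdot\vert$) to ensure $\vert\vert u_1^{(n)}-u_1^{(j(n))}\vert\vert\le 2^{-n}$. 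Applying Theorem~\ref{theospanning} with $Y=\hat{X_1}$ and $S=T_1|_{\hat{X_1}}$ then shows that $\sigma_p(T_1)\cap\T$ is uncountable; since $\sigma_p(T_1)\cap\T=\exp\big(\sigma_p(A)\cap i\R\big)$ and the exponential map is at most countable-to-one on $i\R$, the set $\sigma_p(A)\cap i\R$ is uncountable as well.

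It remains to prove that $\sup_{k\ge 0}\vert\vert T_{n_k}\vert\vert<+\infty$, and this is the step where the real work lies. Starting from the Dyson expansion
$$
T_{n_k}=e^{in_kD}+\sum_{m\ge 1}\ \int_{0\le s_1\le\cdots\le s_m\le n_k}e^{i(n_k-s_m)D}\,B\,e^{i(s_m-s_{m-1})D}\,B\cdots B\,e^{is_1D}\ ds_1\cdots ds_m,
$$
which converges absolutely because $B$ is nuclear, I would establish the exact analog of Lemma~\ref{expressiont}: for $1\le\ell-k$ the coefficient $\langle x_{i,k}^{*},(T_{n_k}-e^{in_kD})x_{i,\ell}\rangle$ equals $\alpha_{i,\ell-1}\alpha_{i,\ell-2}\cdots\alpha_{i,k}$ times the simplex integral
$$
\widetilde{s}_{k,\ell}^{\,(n_k)}:=\int_{0\le s_1\le\cdots\le s_{\ell-k}\le n_k}e^{\,is_1\theta_\ell+i(s_2-s_1)\theta_{\ell-1}+\cdots+i(n_k-s_{\ell-k})\theta_k}\ ds_1\cdots ds_{\ell-k},
$$
which plays the role of the sum $s_{k,\ell}^{(n_k)}$ of Lemma~\ref{expressiont} and satisfies $\big\vert\widetilde{s}_{k,\ell}^{\,(n_k)}\big\vert\le n_k^{\ell-k}/(\ell-k)!$. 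Since this is the same bound as the one used for $\vert s_{k,\ell}^{(n_k)}\vert$ in the difficult part of \cite[Theorem 2.1]{EisnerGrivaux}, that estimate --- hence the estimate of the quantity \eqref{sommepetite} in the proof of Theorem~\ref{theoremejamisonuniversel} --- can be rewritten line by line with $s_{k,\ell}^{(n_k)}$ replaced by $\widetilde{s}_{k,\ell}^{\,(n_k)}$: taking the $\lambda_\ell$ successively close enough to $\lambda_{j(\ell)}$ for $d_{(n_k)}$ forces the relevant sums to be at most $2^{1-\ell}$, whence $\vert\vert T_{n_k}-e^{in_kD}\vert\vert\le 1$ by density. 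As $e^{in_kD}$ is the diagonal operator with unimodular entries $\lambda_\ell^{n_k}$, unconditionality gives $\vert\vert e^{in_kD}\vert\vert\le C$, so $\sup_{k\ge 0}\vert\vert T_{n_k}\vert\vert\le C+1$, and Fact~\ref{faitjamison} shows that $(T_t)_{t\ge 0}$ is bounded with respect to $(t_k)_{k\ge 0}$. The main obstacle is exactly this last step: setting up the Dyson-series analog of Lemma~\ref{expressiont} and re-running the Eisner--Grivaux estimate with simplex integrals in place of finite geometric sums; everything else is a routine transcription of the proof of Theorem~\ref{theoremejamisonuniversel}.
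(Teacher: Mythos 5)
Your reduction to the integer sequence $(n_k)_{k\ge 0}$ via Theorem \ref{equivalencejamison} and Fact \ref{faitjamison} is correct, and producing the semigroup from a bounded generator is the right idea. The gap is in the step you yourself flag as the main obstacle, the estimate $\sup_{p}\Vert T_{n_p}-e^{in_pD}\Vert\le 1$. You justify it by saying that $\vert\widetilde{s}_{k,\ell}^{\,(n)}\vert\le n^{\ell-k}/(\ell-k)!$ is ``the same bound as the one used for $\vert s_{k,\ell}^{(n)}\vert$'' in the difficult part of \cite{EisnerGrivaux}, so that the estimate of the quantity \eqref{sommepetite} can be rewritten line by line. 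But the crude volume bound (respectively the count $\binom{n}{\ell-k}$ of terms in $s_{k,\ell}^{(n)}$) is not what drives the Eisner--Grivaux estimate: for fixed $k<\ell$ these quantities are unbounded in $n=n_p$, while the prefactor $w_{\ell-1}\cdots w_k\,\vert\lambda_\ell-\lambda_{j(\ell)}\vert/\vert\lambda_k-\lambda_{j(k)}\vert$ is a fixed (and, because the $w$'s are chosen large, huge) constant once $\lambda_\ell$ is chosen; a bound on \eqref{sommepetite} uniform in $p$ therefore cannot follow from the trivial estimate. What \cite{EisnerGrivaux} actually exploits is the structure of $s_{k,\ell}^{(n_p)}$ as a complete homogeneous symmetric function of unimodular numbers which are pairwise close for the metric $d_{(n_k)}$, so that the differences $\lambda_q^{n_p}-\lambda_r^{n_p}$ are uniformly small in $p$. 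Your simplex integrals do carry an analogous structure (up to normalization they are divided differences of $z\mapsto e^{in_pz}$ at the points $\theta_k,\dots,\theta_\ell$, by the Hermite--Genocchi formula), so the route is probably salvageable, but transferring the refined estimate to these integrals is exactly the content that would have to be written out; it is not a transcription, and as stated the crucial step rests on a false premise.

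The more useful remark is that none of this work is needed. The paper keeps the discrete operator $T=D+B$ of Theorem \ref{theoremejamisonuniversel} unchanged, and only adds a spectral localization: choose $\lambda_1$ with $\vert\lambda_1-1\vert=\frac{1}{3}$, put every $\lambda_n$ on the arc between $1$ and $\lambda_1$, and take the $\lambda_n$ close enough to the $\lambda_{j(n)}$ that $\Vert B\Vert<\frac{1}{3}$. Then $\sigma(T)$ is contained in the half-plane $\{z\in\C\,;\,\mathfrak{Re}\,z>\frac{1}{2}\}$, on which the principal logarithm is analytic, so $A:=\mathrm{Log}\,T$ is a bounded operator by the holomorphic functional calculus and $(e^{tA})_{t\ge 0}$ is a uniformly continuous semigroup with $e^{nA}=T^{n}$ for every integer $n$. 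The partial power-boundedness $\sup_k\Vert T^{n_k}\Vert<+\infty$ and the uncountability of $\sigma_p(T)\cap\T=\exp(\sigma_p(A)\cap i\R)$ are then inherited verbatim from Theorem \ref{theoremejamisonuniversel}, with no new norm estimates and no Dyson expansion. The logarithm trick is the intended point of the proof: it converts the discrete construction into the continuous one for free.
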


\begin{proof}
Let $(t_k)_{k\ge 0}$ be an increasing sequence of positive real numbers such that $t_0=1$ and $t_k \longrightarrow +\infty$. We assume that $(t_k)_{k\ge 0}$ is not a Jamison sequence for $C_0$-semigroups and we need to construct a $C_0$-semigroup (with infinitesimal generator $A$) on the space $X$ which is bounded with respect to the sequence $(t_k)_{k\ge 0}$ and such that the set $\sigma_p(A)\cap i\R$ is uncountable. For any nonnegative integer $k$, we denote by $n_k$ the integer part of $t_k$ (in particular $n_0=1$). Since $(t_k)_{k\ge 0}$ is not a Jamison sequence for $C_0$-semigroups, we know from Theorem \ref{equivalencejamison} that the sequence $(n_k)_{k\ge 0}$ is not a Jamison sequence. We then can consider the operator $T=D+B$ which has been constructed in the proof of Theorem \ref{theoremejamisonuniversel}: it is partially power-bounded with respect to the sequence $(n_k)_{k\ge 0}$ and the set $\sigma_p(T)\cap \T$ is uncountable. In the proof of Theorem \ref{theoremejamisonuniversel}, we can choose $\lambda_1$ very close to $1$. For instance, we can take this unimodular complex number in such a way that $\vert\lambda_1-1\vert=\frac{1}{3}$. Then we choose $\lambda_n$ on the arc between $1$ and $\lambda_1$  for any positive integer $n$. We have $\sigma(D)=\overline{\big\{\lambda_n\,;\, n\ge 1\big\}}$ and if we take $\lambda_n$ sufficiently close to $\lambda_{j(n)}$, we get $\vert\vert B\vert\vert <\frac{1}{3}$ which implies that $\sigma(T)\subset \sigma(D)_{1/3}$. Here we denote by $K_\varepsilon$ the set
\begin{equation*}
K_\varepsilon :=\big\{z\in \C\,;\, \mathrm{dist}(z,K)<\varepsilon\big\},
\end{equation*}
where $K$ is a compact set of the complex plane. In particular, the set $\sigma(T)$ is contained in $\mathcal{P}_{1/2}:=\big\{z\in \C\,;\, \mathfrak{Re}\, z>\frac{1}{2}\big\}$. Since the complex logarithm is an analytic function in this domain, the operator $\mathrm{Log}\,T$ is well-defined (by the functional calculus) and bounded on $X$. Then the $C_0$-semigroup $(T_t)_{t\ge 0}$ with infinitesimal generator $\mathrm{Log}\,T$ is such that for any $t\ge 0$, $ T_t=e^{t\,\mathrm{Log}\, T}.$ Since for any $z\in \mathcal{P}_{1/2}$, the equality $e^{n\,\mathrm{Log}\,z}=z^n$ holds true, we conclude that for any nonnegative integer $k$, we have $T_{n_k}=T^{n_k}$. Furthermore, we know from Fact \ref{faitjamison} that the semigroup $(T_t)_{t\ge 0}$ is bounded with respect to the sequence $(t_k)_{k\ge 0}$ and since
\begin{equation*}
\sigma_p(T)\cap \mathbb{T}=\sigma_p(T_1)\cap\mathbb{T}=\exp(\sigma_p(\mathrm{Log}\, T)\cap i\mathbb{R}),
\end{equation*}
this yields that the set $\sigma_p(\mathrm{Log}\, T)\cap i\mathbb{R}$ is uncountable.
\end{proof}

We finish this paper by proving a result concerning the Hausdorff dimension of $\sigma_p(A)\cap i\R$ which fits into the framework of \cite{BadeaGrivaux2}.

\subsection{Hausdorff dimension of $\sigma_p(A)\cap i\R$}
Ransford and Roginskaya proved in \cite{RansfordRoginskaya} that if a $C_0$-semigroup (with infinitesimal generator $A$) of bounded linear operators on a separable complex Banach space is bounded with respect to an increasing sequence of positive real numbers which tends to infinity, then the set $\sigma_p(A)\cap i\R$ has Lebesgue measure zero (\cite[Theorem 4.1]{RansfordRoginskaya}). It is then natural to study the Hausdorff dimension of $\sigma_p(A)\cap i\R$. The same authors proved that the Hausdorff dimension $\dim_H(\sigma_p(A)\cap i\R)$ of $\sigma_p(A)\cap i\R$ can be controlled by the growth of the sequence $(t_k)_{k\ge 0}$ (\cite[Theorem 4.1]{RansfordRoginskaya}). The results below are the same as in \cite[Section 3]{BadeaGrivaux2} but in the context of $C_0$-semigroups. To begin with, we prove the following theorem.

\begin{Theo}\label{hausdorff}
Let $(n_k)_{k\ge 0}$ be an increasing sequence of positive integers such that $n_0=1$. Let $\mathcal{S}$ be any class of subsets of the unit circle $\T$ such that every subset of an element of $\mathcal{S}$ is an element of $\mathcal{S}$ itself. The following assertions are equivalent$:$\\
\noindent $(1)$ for every separable complex Banach space $X$ and every $C_0$-semigroup $(T_t)_{t\ge 0}$ of bounded linear operators on $X$ which is bounded with respect to the sequence $(n_k)_{k\ge 0}$, the set $\sigma_p(T_1)\cap \T$ belongs to the set $\mathcal{S};$\\
\noindent $(2)$ for every subset $K$ of $\T$ not belonging to $\mathcal{S}$, the metric space $(K,d_{(n_k)})$ is non-separable$;$\\
\noindent $(3)$ for every subset $K$ of $\T$ not belonging to $\mathcal{S}$, there exists $\varepsilon>0$ such that $K$ contains an uncountable $\varepsilon$-separated family for the distance $d_{(n_k)}$.
\end{Theo}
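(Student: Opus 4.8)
The plan is to prove the three equivalences in a cycle $(1)\Rightarrow(2)\Rightarrow(3)\Rightarrow(1)$, mirroring the structure of the corresponding discrete statement \cite[Theorem 2.8]{BadeaGrivaux2} and exploiting the bridge between semigroups and single operators that is already available through Theorem \ref{jamisoncontinu} and its proof.

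For $(2)\Rightarrow(3)$, I would argue purely metrically: if $K\subset\T$ does not belong to $\mathcal{S}$ and $(K,d_{(n_k)})$ is non-separable, then $K$ is not covered by countably many $d_{(n_k)}$-balls of radius $\varepsilon$ for some $\varepsilon>0$; a standard maximality/Zorn argument then extracts a maximal $\varepsilon$-separated subset $F\subset K$, which must be uncountable (otherwise the balls of radius $\varepsilon$ centered at $F$ would cover $K$ by maximality). This is the routine implication. For $(3)\Rightarrow(1)$, I would proceed by contraposition: suppose $(T_t)_{t\ge0}$ is a $C_0$-semigroup on a separable $X$ with $\sup_{k}\|T_{n_k}\|=:M<\infty$, and let $A$ be its generator; set $K=\exp(\sigma_p(A)\cap i\R)=\sigma_p(T_1)\cap\T$ and suppose $K\notin\mathcal{S}$. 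By $(3)$, $K$ contains an uncountable $\varepsilon$-separated family $(\lambda_j)_j$ for $d_{(n_k)}$. Choosing unit eigenvectors $x_j$ of $T_1$ (equivalently of the generator, for the corresponding purely imaginary eigenvalues) with $T_{n_k}x_j=\lambda_j^{n_k}x_j$, the triangle-inequality computation of \eqref{inegalitestriangulaires}--type (exactly as in the proof of Theorem \ref{jamisoncontinu}) gives $\|x_i-x_j\|\ge d_{(n_k)}(\lambda_i,\lambda_j)/(M+1)\ge \varepsilon/(M+1)$ for distinct indices; since $X$ is separable this family must be countable, a contradiction. Hence $K=\sigma_p(T_1)\cap\T\in\mathcal{S}$, proving $(1)$. (Here one uses that $\mathcal{S}$ is hereditary only implicitly; the real input is $K\notin\mathcal{S}$.)

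The implication $(1)\Rightarrow(2)$ is the hard part and is where the semigroup construction from the proof of Theorem \ref{jamisoncontinu} is needed. I would argue by contraposition: assume there is a subset $K\subset\T$ with $K\notin\mathcal{S}$ such that $(K,d_{(n_k)})$ \emph{is} separable; we must produce a separable complex Banach space $X$ and a $C_0$-semigroup $(T_t)_{t\ge0}$ on $X$, bounded with respect to $(n_k)_{k\ge0}$, such that $\sigma_p(T_1)\cap\T\notin\mathcal{S}$. This is precisely the construction carried out in the implication $(1)\Rightarrow(3)$ of Theorem \ref{jamisoncontinu}: take the weighted $L^2$-space $X$ of measurable functions on $[0,\infty[$ with weight $(1+t^2)^{-1}$, the translation semigroup $S_t f(x)=f(x+t)$, the twisted norm $\|\cdot\|_*$ making $(S_t)$ partially bounded with respect to $(n_k)$, the index set $I_K=\{\eta\in[0,2\pi[\,;\,e^{i\eta}\in K\}$, and the closed subspace $X_*^K=\overline{\mathrm{span}}^{\|\cdot\|_*}[e_\eta\,;\,\eta\in I_K]$. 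The Lemma on continuity of the eigenvector field $E:\eta\mapsto e_\eta$ (using the bound $d_j(e^{i\eta},e^{i\xi})\le(j+1)2^j d_{(n_k)}(e^{i\eta},e^{i\xi})$ established there) shows that $X_*^K$ is separable precisely because $(K,d_{(n_k)})$ is separable, and $(S_t)$ restricted to $X_*^K$ is strongly continuous by a density argument. The generator $A$ has $\sigma_p(A)\cap i\R\supset iI_K$, so $\sigma_p(S_1)\cap\T\supset K$; since $K\notin\mathcal{S}$ and $\mathcal{S}$ is hereditary, $\sigma_p(S_1)\cap\T\notin\mathcal{S}$ (if it were in $\mathcal{S}$, then so would its subset $K$). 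This contradicts $(1)$, completing the cycle.

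I expect the only genuine obstacle to be bookkeeping: one must check that the construction in Theorem \ref{jamisoncontinu}, which was stated there to produce an \emph{uncountable} unimodular point spectrum, in fact produces a $\sigma_p(S_1)\cap\T$ containing a prescribed $K$, and that the hereditariness of $\mathcal{S}$ is exactly what converts ``$K\notin\mathcal{S}$'' into ``$\sigma_p(S_1)\cap\T\notin\mathcal{S}$'' — this replaces the cruder ``$K$ uncountable $\Rightarrow$ spectrum uncountable'' step. Everything else is a direct transcription of arguments already available in the excerpt, so the write-up should be short: state the cycle, point to Theorem \ref{jamisoncontinu} for the substantial construction and to the triangle-inequality estimate for $(3)\Rightarrow(1)$, and spell out the hereditariness bookkeeping.
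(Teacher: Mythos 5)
Your proposal is correct and follows essentially the same route as the paper: the paper likewise treats $(2)\Leftrightarrow(3)$ as a routine metric fact, proves $(3)\Rightarrow(1)$ by contraposition via the eigenvector-separation estimate $\vert\vert e_\lambda-e_\mu\vert\vert\ge d_{(n_k)}(\lambda,\mu)/(M+1)$ from Theorem \ref{jamisoncontinu}, and disposes of $(1)\Rightarrow(2)$ by invoking the translation-semigroup construction on $X_*^K$ from that same proof. Your explicit observation that the hereditariness of $\mathcal{S}$ is what upgrades ``$K\notin\mathcal{S}$ and $K\subset\sigma_p(S_1)\cap\T$'' to ``$\sigma_p(S_1)\cap\T\notin\mathcal{S}$'' is a useful piece of bookkeeping the paper leaves implicit, but it does not constitute a different argument.
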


\begin{proof}
It is clear that assertions $(2)$ and $(3)$ are equivalent. We now prove $(3) \Rightarrow (1)$. Let $(T_t)_{t\ge 0}$ be a $C_0$-semigroup of bounded linear operators on $X$ such that $M:=\sup_{k\ge 0}\vert\vert T_{n_k}\vert\vert<+\infty$ and assume that  $\sigma_p(T_1)\cap \mathbb{T}\notin \mathcal{S}$. According to $(3)$, there exists $\varepsilon>0$ such that $\sigma_p(T_1)\cap \T$ contains an uncountable $\varepsilon$-separated family for the distance $d_{(n_k)}$. Let $\lambda$ and $\mu$ be unimodular eigenvalues of $T_1$ with associated eigenvectors (of norm $1$) $e_\lambda$ and $e_\mu$. As in the proof of Theorem \ref{jamisoncontinu}, we obtain
\begin{equation*}
\vert\vert e_\lambda- e_\mu\vert\vert\ge \frac{d_{(n_k)}(\lambda,\mu)}{M+1}\ge \frac{\varepsilon}{M+1},
\end{equation*}
which contradicts the separability of $X$. The proof of $(1) \Rightarrow (2)$ is the same as that of Theorem \ref{jamisoncontinu}.
\end{proof}

Let $\mathcal{S}$ stand for the Borel subsets of $\T$ with Hausdorff dimension strictly less than $1$. We get then the following corollary.

\begin{Cor}\label{hausdorffcorollaire}
Let $(n_k)_{k\ge 0}$ be an increasing sequence of positive integers such that $n_0=1$. The following assertions are equivalent$:$\\
\noindent $(1)$ there exists a separable complex Banach space $X$ and a $C_0$-semigroup $(T_t)_{t\ge 0}$ of bounded linear operators on $X$ which is bounded with respect to the sequence $(n_k)_{k\ge 0}$ and such that the set $\sigma_p(T_1)\cap \T$ is of Hausdorff dimension equal to $1;$\\
\noindent $(2)$ there exists a subset $K$ of $\T$ of Hausdorff dimension equal to $1$ such that the metric space $(K,d_{(n_k)})$ is separable.
\end{Cor}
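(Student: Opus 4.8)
The plan is to obtain this equivalence as an immediate consequence of Theorem \ref{hausdorff}, by applying it to a suitably chosen class $\mathcal S$ and then passing to contrapositives. First I would take $\mathcal S$ to be the class of all subsets of $\T$ whose Hausdorff dimension is strictly less than $1$. Since the Hausdorff dimension does not increase under inclusion, every subset of a member of $\mathcal S$ again belongs to $\mathcal S$, so $\mathcal S$ satisfies the hypothesis of Theorem \ref{hausdorff} and the theorem applies verbatim. (Working with all subsets rather than only the Borel ones is what makes $\mathcal S$ hereditary, and this matches the fact that neither assertion of the corollary carries a Borel restriction; alternatively one may invoke that $\sigma_p(T_1)\cap\T$ is a Borel subset of $\T$.)

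Next I would translate the two assertions of Theorem \ref{hausdorff} for this particular $\mathcal S$. Because $\dim_H K\le\dim_H\T=1$ for every $K\subseteq\T$, a subset of $\T$ fails to belong to $\mathcal S$ precisely when its Hausdorff dimension equals $1$. Hence assertion $(1)$ of Theorem \ref{hausdorff} reads: for every separable complex Banach space $X$ and every $C_0$-semigroup $(T_t)_{t\ge 0}$ on $X$ bounded with respect to $(n_k)_{k\ge 0}$, the set $\sigma_p(T_1)\cap\T$ has Hausdorff dimension strictly less than $1$; and assertion $(2)$ reads: for every $K\subseteq\T$ with $\dim_H K=1$, the metric space $(K,d_{(n_k)})$ is non-separable.

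Finally I would negate both reformulated assertions. The negation of the reformulated $(1)$ is exactly assertion $(1)$ of the corollary (there exist a separable $X$ and a $C_0$-semigroup on $X$, bounded with respect to $(n_k)_{k\ge 0}$, with $\sigma_p(T_1)\cap\T$ of Hausdorff dimension $1$), and the negation of the reformulated $(2)$ is exactly assertion $(2)$ of the corollary (there exists $K\subseteq\T$ of Hausdorff dimension $1$ for which $(K,d_{(n_k)})$ is separable). Since Theorem \ref{hausdorff} gives the equivalence of its assertions $(1)$ and $(2)$, their negations are equivalent, which is precisely the statement to be proved. There is no genuine obstacle in this argument; the only point demanding a little care is the choice of the hereditary class $\mathcal S$ so that "not belonging to $\mathcal S$" coincides with "having Hausdorff dimension equal to $1$".
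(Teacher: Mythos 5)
Your proposal is correct and is essentially the paper's own argument: the corollary is obtained by applying Theorem \ref{hausdorff} to the hereditary class $\mathcal S$ of subsets of $\T$ with Hausdorff dimension strictly less than $1$ and negating both assertions. Your remark that one should take all such subsets (rather than only Borel ones, as the paper's phrasing suggests) so that $\mathcal S$ is genuinely closed under passing to subsets is a sensible minor tightening of the paper's wording.
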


When the sequence $(n_k)_{k\ge 0}$ is such that $\frac{n_{k+1}}{n_k} \longrightarrow +\infty$, C. Badea and S. Grivaux showed that there exists a subset $K$ of $\T$ of Hausdorff dimension equal to $1$ such that the metric space $(K,d_{(n_k)})$ is separable (see \cite[Theorem 3.4]{BadeaGrivaux2}). Then we have the following result.

\begin{Theo}\label{hausdorff1}
Let $(t_k)_{k\ge 0}$ be an increasing sequence of positive real numbers such that $t_0=1$ and $\frac{t_{k+1}}{t_k} \longrightarrow +\infty$. Then there exists a separable complex Banach space $X$ and a $C_0$-semigroup $(T_t)_{t\ge 0}$ $($with infinitesimal generator $A)$ of bounded linear operators on $X$ which is bounded with respect to the sequence $(t_k)_{k\ge 0}$ and such that $\dim_H(\sigma_p(A)\cap i\R)=1$.
\end{Theo}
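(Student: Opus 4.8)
The plan is to reduce the statement to the integer case settled by Corollary~\ref{hausdorffcorollaire}, feed in the Hausdorff dimension construction of Badea and Grivaux, and then transport both the semigroup and the dimension estimate back to the sequence $(t_k)_{k\ge 0}$.

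First I set $n_k:=\lfloor t_k\rfloor$ for every $k\ge 0$. Since $t_0=1$ one has $n_0=1$, and since $t_{k+1}/t_k\to+\infty$ the sequence $(t_k)$ tends to infinity, so $t_k\ge 1$ and hence $n_k\ge 1$ for all $k$; moreover $n_{k+1}/n_k\ge(t_{k+1}-1)/t_k=t_{k+1}/t_k-1/t_k\to+\infty$. Discarding the finitely many repetitions that may occur among the first terms changes neither the distance $d_{(n_k)}$ nor partial boundedness with respect to the corresponding set of integers, so I may assume that $(n_k)_{k\ge 0}$ is an increasing sequence of positive integers with $n_0=1$ and $n_{k+1}/n_k\to+\infty$.

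Next, since $n_{k+1}/n_k\to+\infty$, I apply \cite[Theorem 3.4]{BadeaGrivaux2} to obtain a subset $K$ of $\T$ with $\dim_H(K)=1$ for which the metric space $(K,d_{(n_k)})$ is separable. By the implication $(2)\Rightarrow(1)$ of Corollary~\ref{hausdorffcorollaire}, there is then a separable complex Banach space $X$ and a $C_0$-semigroup $(T_t)_{t\ge 0}$ of bounded linear operators on $X$, with infinitesimal generator $A$, which is bounded with respect to $(n_k)_{k\ge 0}$ and such that $\sigma_p(T_1)\cap\T$ has Hausdorff dimension $1$. In particular $\vert\vert T_{n_k}\vert\vert$ is bounded, so Fact~\ref{faitjamison} yields $\sup_{k\ge 0}\vert\vert T_{t_k}\vert\vert<+\infty$; that is, $(T_t)_{t\ge 0}$ is bounded with respect to the original sequence $(t_k)_{k\ge 0}$.

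It then remains to read off $\dim_H(\sigma_p(A)\cap i\R)$ from $\dim_H(\sigma_p(T_1)\cap\T)=1$. Using the identity $\sigma_p(T_1)\cap\T=\exp(\sigma_p(A)\cap i\R)$ recalled at the beginning of the section, I decompose $\sigma_p(A)\cap i\R$ into the countably many pieces $\sigma_p(A)\cap i[\pi\ell,\pi(\ell+1)]$, $\ell\in\Z$; on an interval of length at most $\pi$ the map $is\mapsto e^{is}$ is bi-Lipschitz onto its image, hence preserves Hausdorff dimension, and the union of these images is exactly $\sigma_p(T_1)\cap\T$. Since the Hausdorff dimension of a countable union is the supremum of the dimensions of the pieces, and since $\sigma_p(A)\cap i\R$ lies in a line and therefore has dimension at most $1$, I conclude that $\dim_H(\sigma_p(A)\cap i\R)=\dim_H(\sigma_p(T_1)\cap\T)=1$. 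The only delicate point is the passage from $(t_k)$ to the integer sequence $(\lfloor t_k\rfloor)$: one must verify that the integer parts still satisfy $n_{k+1}/n_k\to+\infty$, so that \cite[Theorem 3.4]{BadeaGrivaux2} applies, and that partial boundedness transfers correctly through Fact~\ref{faitjamison}; the remainder is a direct assembly of results already established.
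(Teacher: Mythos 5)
Your proof is correct and follows essentially the same route as the paper: pass to the integer parts $n_k=\lfloor t_k\rfloor$, note $n_{k+1}/n_k\to+\infty$, invoke \cite[Theorem 3.4]{BadeaGrivaux2} and Corollary~\ref{hausdorffcorollaire} to get the semigroup with $\dim_H(\sigma_p(T_1)\cap\T)=1$, transfer boundedness via Fact~\ref{faitjamison}, and recover $\dim_H(\sigma_p(A)\cap i\R)=1$ from the exponential map. The only (harmless) variations are that you explicitly handle possible repetitions among the first few $n_k$ and use a piecewise bi-Lipschitz decomposition of the exponential where the paper simply combines the one-sided Lipschitz bound $\dim_H(f(E))\le\dim_H(E)$ with the trivial upper bound $\dim_H(\sigma_p(A)\cap i\R)\le 1$.
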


\begin{proof}
For any nonnegative integer $k$, we denote by $n_k$ the integer part of $t_k$. In particular, $n_0=1$. Since $\frac{t_{k+1}}{t_k} \longrightarrow +\infty$, we have $\frac{n_{k+1}}{n_k} \longrightarrow +\infty$ as well. But we know from the proof of \cite[Theorem 3.4]{BadeaGrivaux2} that there exists a subset $K$ of $\T$ of Hausdorff dimension equal to $1$ such that the metric space $(K,d_{(n_k)})$ is separable. According to Corollary \ref{hausdorffcorollaire}, there exists a separable complex Banach space $X$ and a $C_0$-semigroup $(T_t)_{t\ge 0}$ (with infinitesimal generator $A$) of bounded linear operators on $X$ which is bounded with respect to the sequence $(n_k)_{k\ge 0}$ and such that  $\dim_H(\sigma_p(T_1)\cap \mathbb{T})=1$. According to Fact \ref{faitjamison}, the semigroup $(T_t)_{t\ge 0}$ is also bounded with respect to the sequence $(t_k)_{k\ge 0}$. Furthermore, it is well-known that a Lipschitz function decreases the Hausdorff dimension. If  $f: i\R \longrightarrow \C$ is the exponential function, then $f$ is a Lispchitz function and we have the equality $\sigma_p(T_1)\cap \mathbb{T}=f(\sigma_p(A)\cap i\R)$. We deduce from this that
\begin{equation*}
1=\dim_H(\sigma_p(T_1)\cap \mathbb{T})\le \dim_H(\sigma_p(A)\cap i\R)\le 1.
\end{equation*}
We thus conclude that the Hausdorff dimension of the set $\sigma_p(A)\cap i\R$ is equal to $1$.
\end{proof}

\noindent\emph{Acknowledgement}: I am grateful to the referee for valuable suggestions on the presentation of the paper. I am also grateful to my advisors, Catalin Badea and Sophie Grivaux, for helpful discussions on Jamison sequences.

\nocite{*}

\noindent \textit{Vincent Devinck}\newline
\noindent \textit{Laboratoire Paul Painlev\'e}\newline
\noindent \textit{UMR 8524}\newline
\noindent \textit{Universit\'e des Sciences et Technologies de Lille}\newline
\noindent \textit{Cit\'e Scientifique}\newline
\noindent \textit{59655 Villeneuve d'Ascq cedex}\newline
\noindent \textit{France}\newline
\noindent \texttt{devinck.vincent@gmail.com}

\end{document}